\numberwithin{equation}{section} \hyphenation{semi-stable}
\newtheorem{Theorem}{Theorem}[section]
\newtheorem{Proposition}[Theorem]{Proposition}
\newtheorem{Corollary}[Theorem]{Corollary}
\theoremstyle{definition}
\newtheorem{Definition}[Theorem]{Definition}
\newtheorem{Remark}[Theorem]{Remark}
\newtheorem{Example}[Theorem]{Example}
\newcommand{\CC}{{\mathbb C}}
\newcommand{\ZZ}{{\mathbb Z}}
\newcommand{\NN}{{\mathbb N}}
\newcommand{\PP}{{\mathbb P}}
\newcommand{\TT}{{\mathcal T}}
\newcommand{\II}{{\mathcal I}}
\newcommand{\OO}{{\mathcal O}}
\newcommand{\rk}{\operatorname{rk}}
\newcommand{\Hom}{\operatorname{Hom}}
\newcommand{\Ext}{\operatorname{Ext}}
\begin{document}
\title[Nearly free curves and arrangements: a vector bundle point of view]
{Nearly free curves and arrangements: \\a vector bundle point of view}

\author[]{ S. Marchesi, J.\ Vall\`es}

\date{\today}

\begin{abstract} 
Many papers are devoted to study logarithmic sheaves associated to reduced divisors,
in particular logarithmic bundles associated to plane curves since forty years in differential and algebraic topology or geometry.
An interesting family of these curves are the so-called free ones for which the associated logarithmic sheaf is the direct
sum of two line bundles. When the curve is a finite set of distinct lines (i.e. a line arrangement),
Terao conjectured thirty years ago that its freeness depends only on its combinatorics.
A lot of efforts were done to prove it but at this time it is only proved up to 12 lines.
If one wants to find a counter example to this conjecture a new family of curves arises naturally: the nearly free curves introduced by Dimca and Sticlaru.
We prove here that the  logarithmic bundle associated to a nearly free curve possesses a minimal non zero section that vanishes on one single point $P$,
called jumping point, and that characterizes the bundle. Then we give a precise description of the behaviour of $P$.
In particular we show, based on detailed examples, that the position of $P$ relatively to its corresponding nearly free arrangement
of lines may or may not be a combinatorial invariant, depending on the chosen combinatorics. 
\end{abstract}

\maketitle 

\section{Introduction}

Given a reduced curve $C$ in $\PP^2=\PP^2(\CC)$ of degree $d$ defined as the zero locus of a homogeneous polynomial $f=0$, we define the Jacobian ideal of $f$, denoted by $\II_{\nabla f}$ as the image of the map
$$
\OO_{\PP^2} \stackrel{\nabla f}{\longrightarrow} \OO_{\PP^2}(d-1),
$$
where $\nabla f$ is the matrix whose entries are given by the partial derivatives of $f$ with respect to the three variables $x,y,z$, i.e. $\nabla f = [\frac{\partial f}{\partial x} \:\: \frac{\partial f}{\partial y} \:\: \frac{\partial f}{\partial z}]$. Its kernel $\TT_C$ is a rank two reflexive sheaf, therefore a vector bundle on $\PP^2$, defined by the following short exact sequence
$$
0 \longrightarrow \TT_C \longrightarrow \OO_{\PP^2}^3 \longrightarrow \II_{\nabla f}(d-1) \longrightarrow 0.
$$
In \cite{S} Saito introduced the notion of free divisor in affine and projective spaces of any dimension. In the same 
volume Terao studied arrangements of hyperplanes that are free divisors (see \cite{T}). In this paper we restrict our study to curves in the projective plane.  
\begin{Definition}\label{f-def}
A reduced curve $C\subset \PP^2$ is called \emph{free} with exponents $(a,b) \in \mathbb{N}^2$, with $a\leq b$ if the associated vector bundle $\TT_C$ is free i.e. if $\TT_C=\OO_{\PP^2}(-a) \oplus \OO_{\PP^2}(-b).$ 
\end{Definition}
Relatively few free curves are known. When the curve $C$ is a finite set of lines (i.e. a line arrangement) an important invariant attached to $C$ is its combinatorics. This combinatorics is described by an incidence graph of points and lines (for details we refer to \cite{OT} which is the reference book on the hyperplane arrangements). Probably the main conjecture about hyperplane arrangements, still open on any field and in any dimension $\ge 2$ is the so-called Terao's conjecture, stated in  
\cite[Conjecture 4.138]{OT}, which says in substance that the freeness of an arrangement depends only on its combinatorics. On the complex projective plane it is proved only up to $12$ lines (see \cite[Corollary 6.3]{FV}). 

\smallskip

If Terao's conjecture is not true then one can find two arrangements $C_0$ and $C$ with the same combinatorics such that $C_0$ is free but $C$ is not. In $\PP^2$ it implies in particular that $\TT_{C_0}=\OO_{\PP^2}(-a) \oplus \OO_{\PP^2}(-b)$ and 
$H^0(\TT_{C}(a-1))\neq 0$ (see \cite{FV}, Lemma 3.2). In particular, assuming here that $H^0(\TT_{C}(a-2))=0$, we obtain 
$$
0 \longrightarrow \OO_{\PP^2}(1-a) \longrightarrow \TT_{C} \longrightarrow \II_Z(-1-b) \longrightarrow 0,
$$
where $Z\subset \PP^2$ is a $0$-dimensional scheme of length $b-a+1$ (\cite[Lemmas 1 and 2]{B}. 

\smallskip

In order to explain the role of $Z$ let us recall some basic facts concerning the restriction of a rank vector bundle $E$
on a line.  By Grothendieck's theorem its restriction to any line $L$ splits as a sum of two line bundles $$E\otimes \OO_L=\OO_L(\alpha)\oplus \OO_L(\beta), \,\,\mathrm{with}\,\,\alpha+\beta=c_1(E).$$
The couple $(\alpha,\beta)\in \ZZ^2$ is called the \textit{splitting type of $E$ on $L$}; the positive integer 
$\delta_L(E):=|\alpha-\beta|$ is its \textit{gap}. It is known that, see \cite[page 29]{OSS} for a reference, this gap is minimal on an open set of the dual projective plane and we will denote its value by $\delta(E)$. The lines $L$ such that $\delta_L(E)>\delta(E)$ are the \textit{jumping lines of} $E$ and we  denote the set of jumping lines by $S(E)$.
The positive integer $o(L):=\frac{\delta_L(E)-\delta(E)}{2}$ is the \textit{order of the jumping line} $L$.

\smallskip

Now let us explain the key role of $Z$ in the context of Terao's conjecture. When a line arrangement $C$ has the same combinatorics than a free line arrangement $C_0$ but is not free, one associates to $C$ a $0$-dimensional scheme $Z$ characterizing its non-freeness in the following sense: since $C_0$ is free we have clearly $\delta_L(\TT_{C_0})=\delta(\TT_{C_0})$ for any $L$; on the contrary,  $\delta_L(\TT_{C})>\delta(\TT_{C})$ for any line $L$ meeting $Z$.   

\smallskip

The first case to be studied is naturally the case $Z=\{P\}$ is a single point (i.e. $a=b$). We will see in Theorem \ref{first-thm}
that the corresponding curve (or the corresponding bundle) is a \emph{nearly free curve} introduced by Dimca and Sticlaru in \cite{DS}.
\begin{Definition}\label{nf-def}
A reduced curve $C\subset \PP^2$ is called \emph{nearly free} with exponents $(a,b) \in \mathbb{N}^2$, with $a\leq b$ if the associated vector bundle $\TT_C$ has a resolution of type
\begin{equation}\label{NFdef}
0 \longrightarrow \OO_{\PP^2}(-b-1) \stackrel{M}{\longrightarrow} \OO_{\PP^2}(-a) \oplus \OO_{\PP^2}(-b)^2 \longrightarrow \TT_C \longrightarrow 0.
\end{equation}
\end{Definition}
\begin{Remark}
Dimca and Sticlaru  gave another definition and obtained this previous one as a characterization of nearly free curves 
(\cite[ Theorem 2.2]{DS}).
\end{Remark}
Throughout this paper, the vector bundle $\TT_C$, associated to a nearly free curve, will be called a \emph{nearly free vector bundle}. Following the seminal work of Dimca and Sticlaru \cite{DS} on nearly free divisors, many works were done and published and it is our belief that, in order to better understand nearly free divisors it is important to clarify what are the nearly free vector bundles. 

In particular, since this point $P$, called jumping point, characterizes the failure of freeness for a nearly free arrangement it is important to understand its behaviour relatively to the corresponding arrangement. This will be studied in section \ref{sec-NFVB}.

We will then focus, in the same section, on the study of the splitting type of a nearly free vector bundle, which has first been considered in the work of Abe and Dimca, see \cite{AD}. In their paper, the authors prove, among other results, that we only have two possible splitting types for a nearly free vector bundle (\cite[Theorem 5.3]{AD}). In this work, we retrieve their result, using the resolution of these vector bundles, and we complete it by describing the geometry of the set of jumping lines. Indeed we show in Proposition \ref{NFSplitting} that the locus of jumping line is the line $\check{P}$ of all lines through $P$ and that the order of any jumping line is $1$ (except of course for the tangent bundle that is nearly free but have no jumping line). Reciprocally we classify in Theorem \ref{characterization-by-jl} rank two vector bundles $E$ such that $S(E)$ is a line in the dual projective plane $\check{\PP}^2$ and such that the order of any jumping line is $1$. We show that this configuration of the jumping locus actually characterizes nearly free vector bundles in the unstable and semistable case. In  the stable case another very specific family of bundles arises, which we also classify completely,  this family does not concern directly our discussion because it is straightforward to notice that the only stable nearly free vector bundle with exponents $(a,b)$ is the tangent bundle twisted by $\OO_{\PP^2}(-b-1)$.

This description of the jumping locus allows us to answer some natural questions regarding the relation between the jumping lines of $\TT_C$ and the lines of the arrangement $C$ (see Corollary \ref{one-generic}). 

In section \ref{sec-addel}, we prove that each nearly free vector bundle can be seen as an extension of a line bundle on a line with a free vector bundle. This construction can be geometrically interpreted as the deletion of a line of the free arrangement passing through a specific amount of triple points (see Proposition \ref{PropDel}). We will also show that each nearly free bundle can be defined as the kernel of a surjective morphism between a free vector bundle and a line bundle on a line (see Proposition \ref{PropAdd}). This construction translates in the addition of a line to the free arrangement, passing again through a specific amount of triple points. If $N$ is the number of lines of the arrangement, $n$ the number of intersection points on one line of the arrangement and $t$ the number of triple points on the same line, then it is easy to verify that $t=N-n-1$. According to this equality the previous two propositions give another formulation of \cite[Theorem 5.7]{AD}.
Such two techniques suggest that we can construct any nearly free vector bundle by adding a line to a  properly chosen free arrangement and also by deleting one from a different free arrangement. In section \ref{sec-ex} we give explicit examples of these two constructions, for each nearly free vector bundle.

Finally, in section \ref{sec-jump} we prove that there is no explicit relation between the combinatorics of the arrangement and the jumping locus. Indeed, we provide examples to show that the jumping point of a nearly free vector bundle $\TT_C$ coming from an arrangement $C$ can be on exactly one line or on many lines of $C$ or outside $C$. Such examples show how the jumping points variates in the projective plane when shifting a line in order to maintain its combinatoric, see Example \ref{ex-1}. They also prove that in some case the combinatorics forces the jumping point to belong to one or multiple lines of the arrangement, see Example \ref{ex-line}; while for some other fixed combinatorics, see Example \ref{ex-inout}, the point can either belong or not to the arrangement.\\

\noindent\textbf{Acknowledgements}\\
The authors wish to thank Takuro Abe for fruitful discussions.

We would like to thank the University of Campinas and the Universit\'e de Pau et des Pays de l'Adour for the hospitality and for providing the best working conditions (in 2016 and 2018 in Campinas and 2017 in Pau).

The first author was partially supported by Funda\c{c}\~{a}o de Amparo \`{a} Pesquisa do Estado de S\~{a}o Paulo (FAPESP), grant 2017/03487-9, by the CNPq grant number 303075/2017-1 and by a grant CNRS(INSMI) given to LMAP. 

The second author was partially supported by Funda\c{c}\~{a}o de Amparo \`{a} Pesquisa do Estado de S\~{a}o Paulo (FAPESP), grant 2018/08524-2. 

\section{Nearly free arrangements and vector bundles}\label{sec-NFVB}
Directly from Definition \ref{nf-def} we have that $c_1(\TT_C) = -a -b +1$ and $c_2(\TT_C) = ab -a +1$. Moreover, it is possible to compute the first non vanishing degree of its global sections then, according to \cite[page 132]{B}, we can remark the following
\begin{itemize}
\item $\TT_C$ is stable (in the sense of Mumford-Takemoto) if and only if $a=b$, and in this case the bundle is the tangent bundle twisted by $\OO_{\PP^2}(-b-1)$, i.e. $\TT_C \simeq T_{\PP^2}(-b-1)$,
\item $\TT_C$ is semistable if and only if $a=b-1$,
\item $\TT_C$ is unstable if and only if $a<b-1$.
\end{itemize}
In any case, stable, semistable or unstable, we prove the following characterization of nearly free vector bundles or curves.
\begin{Theorem}\label{first-thm}
$\TT_C$ is nearly free with exponents $(a,b)\in \NN^2$, with $a\le b$, if and only if there exists a point $P\in \PP^2$ such that $\TT_C$ splits in the following exact sequence 
$$
0 \longrightarrow \OO_{\PP^2}(-a) \longrightarrow \TT_C \longrightarrow \II_P(-b+1)\longrightarrow 0.
$$ 
\end{Theorem}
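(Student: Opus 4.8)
The plan is to establish the two implications by pure homological algebra, the only genuine inputs being the shape of the presentation matrix in one direction and the vanishing $H^1(\PP^2,\OO_{\PP^2}(k))=0$ in the other. Recall that $\TT_C$ is always a vector bundle of rank two.

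First I would prove $(\Rightarrow)$. Assume $\TT_C$ is nearly free, so it is the cokernel of a map $M\colon\OO_{\PP^2}(-b-1)\to\OO_{\PP^2}(-a)\oplus\OO_{\PP^2}(-b)^2$; write the three components of $M$ as a form $g$ of degree $b+1-a$ and two linear forms $\ell_1,\ell_2$. Since $\coker M=\TT_C$ is locally free of rank two, $M$ cannot vanish at any point, i.e. $g,\ell_1,\ell_2$ have no common zero on $\PP^2$. As $a\le b$ we have $\deg g\ge 1$, and this forces $\ell_1,\ell_2$ to be linearly independent: if they were proportional they would vanish along a line, on which $g$ would then also have a zero, contradicting the absence of a common zero. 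Hence $V(\ell_1,\ell_2)=\{P\}$ is a single reduced point, and the Koszul complex of $(\ell_1,\ell_2)$ yields an exact sequence $0\to\OO_{\PP^2}(-b-1)\to\OO_{\PP^2}(-b)^2\to\II_P(-b+1)\to 0$ whose left map is, up to the constant automorphism exchanging the two summands, the one given by $\ell_1,\ell_2$. Comparing the nearly free resolution of $\TT_C$ with this sequence via the projection $\OO_{\PP^2}(-a)\oplus\OO_{\PP^2}(-b)^2\to\OO_{\PP^2}(-b)^2$ (which commutes with the identity on $\OO_{\PP^2}(-b-1)$) produces a morphism $\TT_C\to\II_P(-b+1)$, and the snake lemma computes its kernel to be $\OO_{\PP^2}(-a)$ and its cokernel to be $0$. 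This is exactly the asserted sequence.

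For $(\Leftarrow)$ I would start from $0\to\OO_{\PP^2}(-a)\to\TT_C\to\II_P(-b+1)\to 0$ and again use the Koszul presentation $0\to\OO_{\PP^2}(-b-1)\to\OO_{\PP^2}(-b)^2\xrightarrow{p}\II_P(-b+1)\to 0$. The obstruction to lifting the surjection $p$ along $\TT_C\to\II_P(-b+1)$ lies in $\Ext^1(\OO_{\PP^2}(-b)^2,\OO_{\PP^2}(-a))=H^1(\PP^2,\OO_{\PP^2}(b-a))^{\oplus 2}=0$, so a lift $\widetilde p\colon\OO_{\PP^2}(-b)^2\to\TT_C$ exists; together with the given inclusion $\OO_{\PP^2}(-a)\hookrightarrow\TT_C$ it gives a morphism $\OO_{\PP^2}(-a)\oplus\OO_{\PP^2}(-b)^2\to\TT_C$. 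Comparing the split sequence $0\to\OO_{\PP^2}(-a)\to\OO_{\PP^2}(-a)\oplus\OO_{\PP^2}(-b)^2\to\OO_{\PP^2}(-b)^2\to 0$ with $0\to\OO_{\PP^2}(-a)\to\TT_C\to\II_P(-b+1)\to 0$ through this morphism, the snake lemma shows it is surjective with kernel $\OO_{\PP^2}(-b-1)$, which is precisely the resolution \eqref{NFdef} with exponents $(a,b)$ (here $a\le b$ is given).

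The two snake-lemma chases and the identification of the cokernel of $(\ell_1,\ell_2)$ with $\II_P(-b+1)$ are routine. I expect the only place needing real care to be the argument in $(\Rightarrow)$ that $M$ being a subbundle map together with $a\le b$ forces $\ell_1,\ell_2$ to be independent — this is what guarantees a single reduced point $P$ rather than a longer zero-dimensional scheme — and, in $(\Leftarrow)$, keeping straight which $\Ext^1$ must vanish in order to lift the Koszul presentation of $\II_P(-b+1)$ to $\TT_C$.
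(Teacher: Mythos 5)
Your proposal is correct, and both implications go through. The architecture is close to the paper's---everything revolves around the same $3\times 3$ commutative diagram linking the nearly free resolution, the Koszul resolution of $\II_P(-b+1)$, and the claimed extension---but you enter the diagram from a different side and with a different key input. For the forward implication the paper produces the point $P$ via Barth's lemma: it uses $H^0(\TT_C(a-1))=0$, takes a minimal section, and computes $c_2(\TT_C(a))=1$ to conclude that its vanishing scheme is a single point. You instead read $P$ directly off the presentation matrix as the common zero of the two linear entries of $M$, using $a\le b$ to force $\deg g\ge 1$ and hence to rule out the two linear forms being proportional; this is more elementary (no Chern class computation, no appeal to \cite{B}) and makes transparent exactly where $a\le b$ is used. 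You also spell out the converse in full---lifting the Koszul surjection $\OO_{\PP^2}(-b)^2\to\II_P(-b+1)$ through $\TT_C\to\II_P(-b+1)$ via the vanishing of $\Ext^1(\OO_{\PP^2}(-b)^2,\OO_{\PP^2}(-a))$ and applying the snake lemma---whereas the paper's written proof only treats the forward direction explicitly and leaves the converse implicit in the diagram. One cosmetic remark: the injective map in the Koszul complex of $(\ell_1,\ell_2)$ is $(-\ell_2,\ell_1)^t$ rather than $(\ell_1,\ell_2)^t$, so the constant automorphism of $\OO_{\PP^2}(-b)^2$ you invoke must also negate one summand; this changes nothing in the argument.
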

\begin{proof}
 Let us consider a section $s \in H^0(\TT_C(a))$. Since $H^0(\TT_C(a-1))=0$, it defines the following short exact sequence (\cite[Lemmas 1 and 2]{B}
\begin{equation}\label{NFx}
0 \longrightarrow \OO_{\PP^2}(-a) \longrightarrow \TT_C \longrightarrow \II_Z(-b+1) \longrightarrow 0
\end{equation}
with $Z \subset \PP^2$ a 0-dimensional scheme of length  $c_2(\TT_C(a))=1$. In other words $Z$ is a point  $P\in \PP^2$ and we have actually the following commutative diagram
$$
\xymatrix{
&  & 0\ar[d] & 0\ar[d]\\
 & 0 \ar[r] \ar[d] & \OO_{\PP^2}(-a) \ar[d] \ar[r]^{\simeq} & \OO_{\PP^2}(-a) \ar[d] \ar[r] & 0\\
 0 \ar[r] & \OO_{\PP^2}(-b-1) \ar[d]^{\simeq} \ar[r] & \OO_{\PP^2}(-a) \oplus \OO_{\PP^2}(-b)^2 \ar[r] \ar[d] & \TT_C \ar[r] \ar[d] & 0 \\
 0\ar[r] & \OO_{\PP^2}(-b-1) \ar[d] \ar[r] & \OO_{\PP^2}(-b)^2 \ar[r] \ar[d] & \II_P(-b+1) \ar[d]\ar[r] & 0 \\
 &0 &0 &0 & & 
}
$$
This diagram is constructed considering an element of $\Hom(\OO_{\PP^2}(-a),\TT_C)$ which necessarily comes from an element of $\Hom(\OO_{\PP^2}(-a),\OO_{\PP^2}(-a)\oplus \OO_{\PP^2}(-b)^2)$, being\\ $\Ext^1(\OO_{\PP^2}(-a),\OO_{\PP^2}(-b-1))=0$, and we get the first two rows of the diagram. We complete using the Snake Lemma.
\end{proof}
\begin{Remark}
As it will be explained with more details in Proposition \ref{prop-detpoint}, a nearly free vector bundle is then completely determined by the data of its exponents $(a,b)$ and a point $P\in \PP^2$.
From now on, we will call the point $P$ the \emph{jumping point} of the nearly free vector bundle.
\end{Remark}
\begin{Example}
Consider the union of six lines through four non aligned points (see Figure \ref{con1}). This line arrangement is free with exponents $(2,3)$. It is well known and there are many ways to prove it; one of them consists in seeing these six lines as the three singular conics of a pencil of conics (see \cite{V},\cite[Theorem 2.7]{V2}).

\begin{figure}[ht!]
\begin{center}
\includegraphics[scale=0.35]{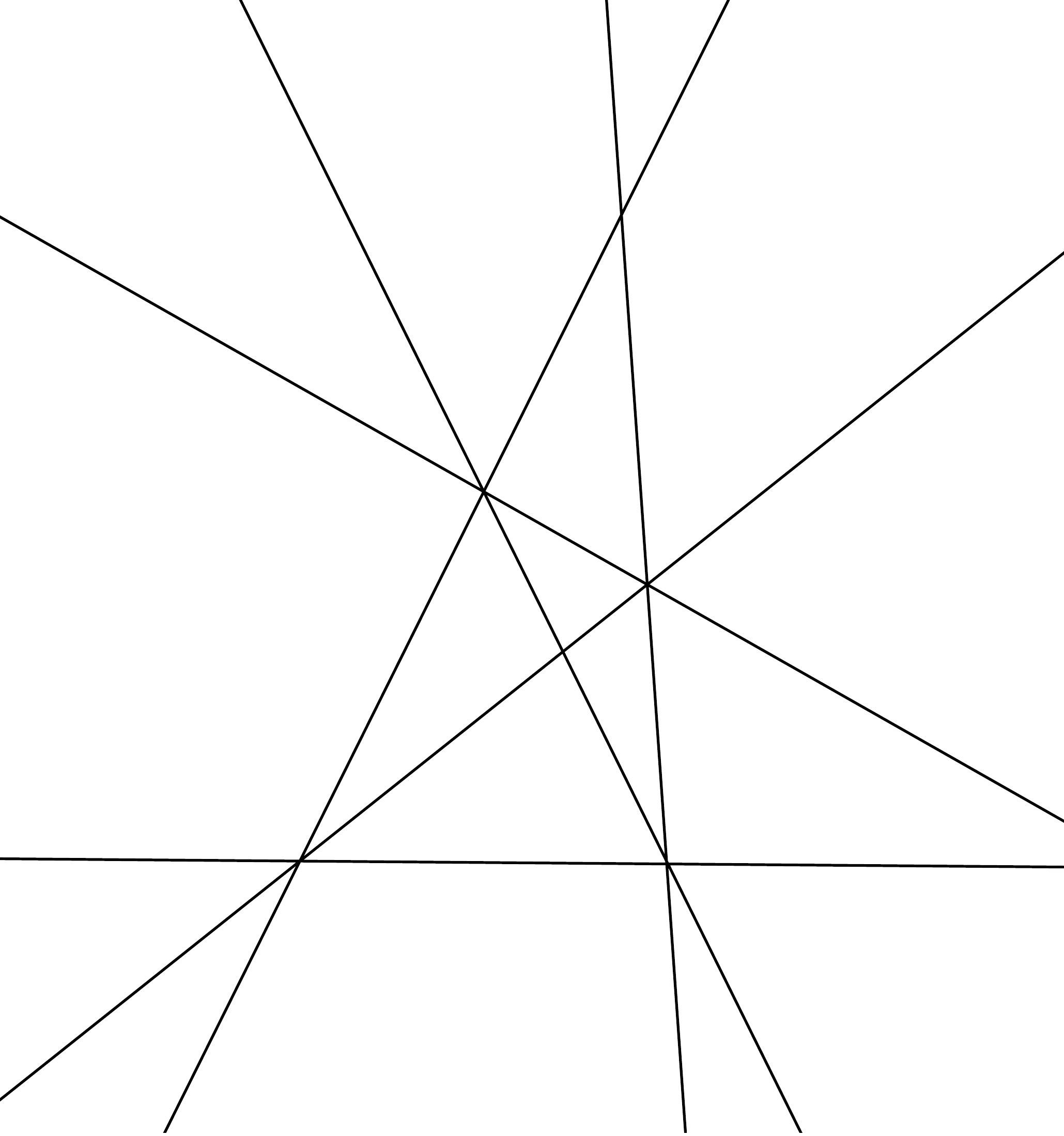}
\caption{}
\label{con1}
\end{center}
\end{figure}

Let us remove now a singular conic and replace it by a smooth one which lies in the pencil determined by the four singular points (see Figure \ref{con2}). Let $C$ be this curve formed by the union of two singular conics and one smooth conic of the same pencil. Then the associated logarithmic bundle $\TT_C$ is nearly free with exponents $(2,4)$ since it verifies 
$$
0 \longrightarrow \OO_{\PP^2}(-2) \longrightarrow \TT_C \longrightarrow \II_P(-3)\longrightarrow 0
$$ 
where the point $P$ is the singular point of the removed singular conic (see \cite{V}, \cite[Theorem 2.8]{V2} for details). 

\begin{figure}[ht!]
\begin{center}
\includegraphics[scale=0.35]{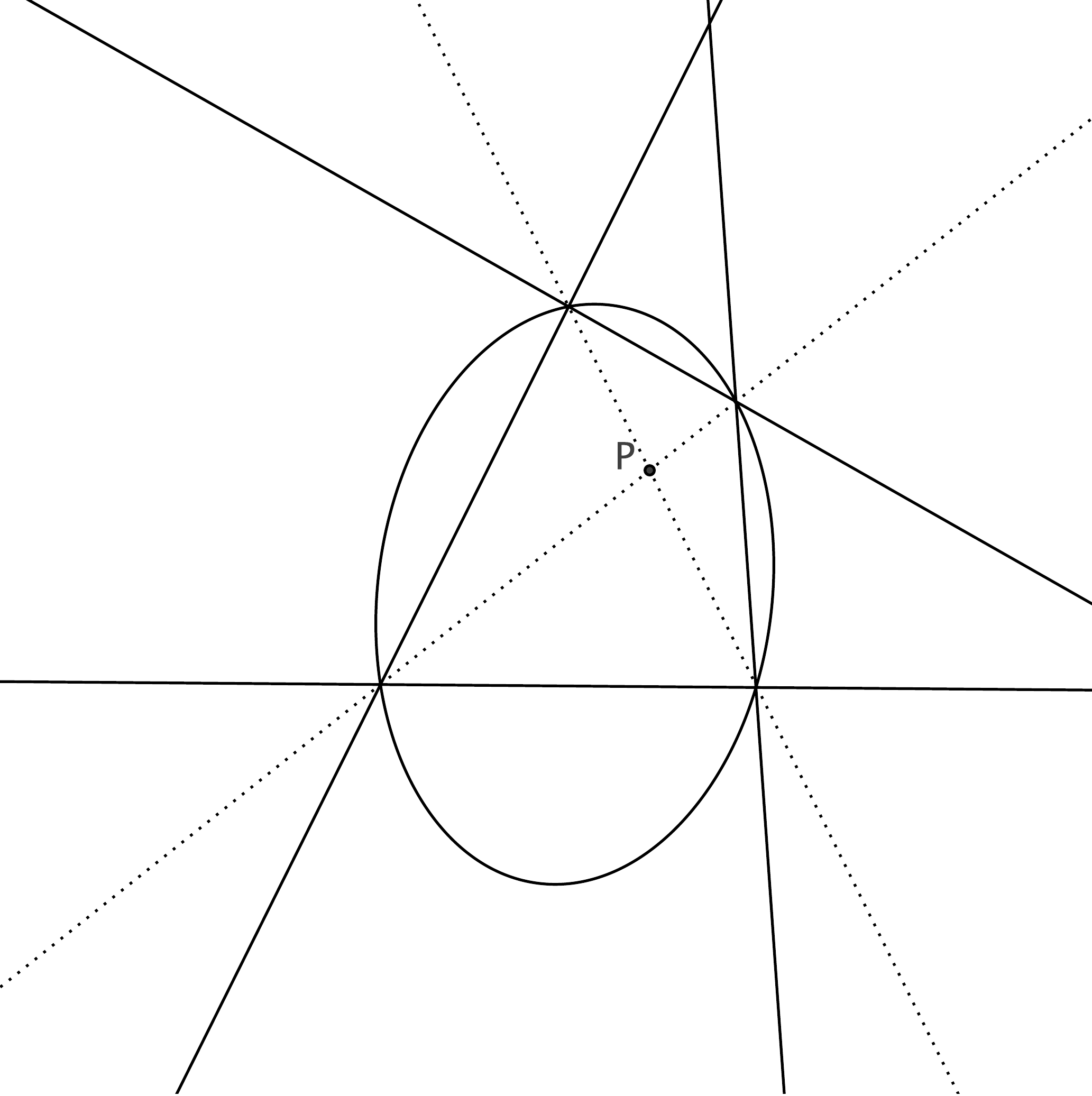}
\caption{}
\label{con2}
\end{center}
\end{figure}

\end{Example}
Through the previous description it is possible to prove the following result
\begin{Proposition}\label{NFSplitting}
Let $\TT_C$ be a nearly free vector bundle with exponents $(a,b)$, with $a \leq b$, and jumping point $P$. Then for every line $L\not\ni P$, we have ${\TT_C}_{|L} \simeq \OO_L(-a) \oplus \OO_L(-b+1)$ and for every line $L \ni P$ we have ${\TT_C}_{|L} \simeq \OO_L(-a+1) \oplus \OO_L(-b)$. Hence, we have only two possible splitting type. 
\end{Proposition}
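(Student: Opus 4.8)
The plan is to restrict the short exact sequence from Theorem~\ref{first-thm} to a line $L$ and read off the possible splitting types, using that $\TT_C$ is locally free of rank two so its restriction to any line is a direct sum of two line bundles whose degrees sum to $c_1(\TT_C)=-a-b+1$.

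First I would treat the case $L\not\ni P$. Here $P\notin L$, so $\II_P\otimes\OO_L\simeq\OO_L$, and tensoring
$$
0 \longrightarrow \OO_{\PP^2}(-a) \longrightarrow \TT_C \longrightarrow \II_P(-b+1)\longrightarrow 0
$$
with $\OO_L$ gives a right-exact sequence $\OO_L(-a)\to {\TT_C}_{|L}\to\OO_L(-b+1)\to 0$; one checks the $\mathcal{T}or_1$ term vanishes (it is $\mathcal{T}or_1(\II_P(-b+1),\OO_L)$, which is $0$ since $\II_P$ is locally free along $L$ when $P\notin L$), so in fact we get a short exact sequence $0\to\OO_L(-a)\to{\TT_C}_{|L}\to\OO_L(-b+1)\to 0$. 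Writing ${\TT_C}_{|L}\simeq\OO_L(\alpha)\oplus\OO_L(\beta)$ with $\alpha\ge\beta$ and $\alpha+\beta=-a-b+1$, the inclusion of $\OO_L(-a)$ forces $\alpha\ge -a$, hence $\beta\le -b+1$; the surjection onto $\OO_L(-b+1)$ forces $\alpha\ge -b+1$ as well, but more usefully $\operatorname{Hom}(\OO_L(\alpha)\oplus\OO_L(\beta),\OO_L(-b+1))\neq 0$ needs $\beta\le -b+1$, and dually the sub-line-bundle $\OO_L(-a)\hookrightarrow$ means $-a\le\alpha$. Combining $\alpha\ge -a$ and $\beta\le -b+1$ with $\alpha+\beta=-a-b+1$ squeezes $\alpha=-a$, $\beta=-b+1$. (If one prefers to avoid the sub/quotient line-bundle inequalities, simply compute $h^0({\TT_C}_{|L}(a-1))$ from the sequence: it is $0$, which already forces $\alpha\le -a$, and combined with $\alpha\ge -a$ gives equality.)

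Next, the case $L\ni P$. Now $\II_P\otimes\OO_L$ has torsion: there is an exact sequence $0\to\mathcal{T}or_1(\OO_P,\OO_L)\to\II_P\otimes\OO_L\to\OO_L\to 0$, and since $P$ is a Cartier divisor on the smooth curve $L$, $\II_P\otimes\OO_L\simeq\OO_L(-1)\oplus(\text{torsion})$; concretely $\II_P\cdot\OO_L=\OO_L(-1)$ as the ideal of the point $P$ in $L$, so the image of $\II_P(-b+1)\otimes\OO_L$ in ${\TT_C}_{|L}$, being torsion-free (a subsheaf of a bundle), is $\OO_L(-b)$. Thus we still get ${\TT_C}_{|L}$ surjecting onto $\OO_L(-b)$ with the sub-bundle $\OO_L(-a)$ mapping in, but now the composite $\OO_L(-a)\to\OO_L(-b)$ vanishes at $P$. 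Repeating the splitting-type analysis: ${\TT_C}_{|L}\simeq\OO_L(\alpha)\oplus\OO_L(\beta)$, $\alpha+\beta=-a-b+1$, with a surjection onto $\OO_L(-b)$ forcing $\beta\le -b$, hence $\alpha\ge -a+1$; and $h^0({\TT_C}_{|L}(a-2))=0$ can be checked from the sequence to give $\alpha\le -a+1$. Therefore $\alpha=-a+1$, $\beta=-b$. Since $a\le b$ one has $-a\ge -b+1>-b\ge -a+1$... (correct ordering: $-a+1\ge -b$ and $-a\ge -b+1$, consistent), and the two splitting types $(-a,-b+1)$ and $(-a+1,-b)$ are indeed the only ones, proving the final assertion.

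\textbf{Main obstacle.} The delicate point is the case $L\ni P$: one must correctly identify $\II_P\otimes\OO_L$ versus the \emph{image} of $\II_P(-b+1)$ restricted to $L$ inside the locally free ${\TT_C}_{|L}$ — the naive tensor product carries a torsion summand that must be discarded because the image sheaf is torsion-free. The cleanest way to handle this is to work with the saturated image or, equivalently, to observe directly that $\mathcal{I}_P\cdot\mathcal{O}_L=\mathcal{O}_L(-P)=\mathcal{O}_L(-1)$ and that ${\TT_C}_{|L}$ fits in $0\to\OO_L(-a)\to{\TT_C}_{|L}\to\OO_L(-b)\to 0$ with the extra vanishing at $P$ recorded but irrelevant for the degree count. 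Everything else is a routine application of Grothendieck's theorem on $\PP^1$ together with the numerical constraint $\alpha+\beta=c_1$.
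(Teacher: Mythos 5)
Your overall strategy --- restricting the sequence of Theorem \ref{first-thm} to $L$ and identifying $(\II_P(-b+1))_{|L}$ according to whether $P\in L$ --- is exactly the paper's, and your treatment of the torsion when $P\in L$ is the right idea. The gap is in the final step where you pin down $(\alpha,\beta)$. Given the constraint $\alpha+\beta=-a-b+1$, the two inequalities $\alpha\ge -a$ and $\beta\le -b+1$ are \emph{equivalent} to one another, so ``combining'' them squeezes nothing: $(\alpha,\beta)=(-a+5,-b-4)$ satisfies both. The same remark applies to ``$\beta\le -b$, hence $\alpha\ge -a+1$'' in the second case. Your parenthetical fallback is therefore the actual argument, but it too has problems: $h^0({\TT_C}_{|L}(a-1))$ is \emph{not} zero when $a=b$ (the sequence gives $h^0=h^0(\OO_L(a-b))=1$; this is the twisted tangent bundle, where the two asserted splitting types coincide, so the statement survives but your deduction does not), and in the second case $h^0({\TT_C}_{|L}(a-2))=0$ does not follow from the restricted sequence by a naive count, because the torsion summand $\OO_P$ of $(\II_P(-b+1))_{|L}$ contributes a global section; one needs the saturation argument you only gesture at. Relatedly, the sequence $0\to\OO_L(-a)\to{\TT_C}_{|L}\to\OO_L(-b)\to 0$ in your ``main obstacle'' paragraph cannot be exact, since the degrees do not sum to $c_1({\TT_C}_{|L})=-a-b+1$; the saturated kernel of ${\TT_C}_{|L}\to\OO_L(-b)$ is $\OO_L(-a+1)$.

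The clean way to finish --- and what the paper's terse ``being $b\ge a$'' is silently invoking --- is that once you have short exact sequences of line bundles on $L\simeq\PP^1$, namely $0\to\OO_L(-a)\to{\TT_C}_{|L}\to\OO_L(-b+1)\to 0$ (resp.\ $0\to\OO_L(-a+1)\to{\TT_C}_{|L}\to\OO_L(-b)\to 0$ after saturating), they split because $\Ext^1_L\bigl(\OO_L(-b+1),\OO_L(-a)\bigr)=H^1(\OO_L(b-a-1))$ (resp.\ $H^1(\OO_L(b-a+1))$) vanishes precisely thanks to $b\ge a$. Alternatively, use the surjection correctly: a surjection $\OO_L(\alpha)\oplus\OO_L(\beta)\twoheadrightarrow\OO_L(d)$ with $\alpha\ge\beta$ forces $d\ge\alpha$ or $d=\beta$, and this, combined with $\alpha\ge -a$ (resp.\ $\alpha\ge -a+1$), does determine the splitting in all cases including $a=b$. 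As written, your deduction of the splitting type does not go through.
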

\begin{Remark} Since the gap of the splitting type increases for lines through the jumping point $P$ these lines are called jumping lines of  $\TT_C$.
\end{Remark}
\begin{proof}
It comes directly considering the restriction of $\II_P(-b+1)$. Indeed, if $L \not \ni P$, $(\II_P(-b+1))_{|L} \simeq \OO_L(-b+1)$ and, being $b\geq a$, we have ${\TT_C}_{|L} \simeq \OO_L(-a) \oplus \OO_L(-b+1)$. If $L \ni P$, we get $(\II_P(-b+1))_{|L} \simeq \OO_L(-b)\oplus\OO_P$, implying that ${\TT_C}_{|L} \simeq \OO_L(-a+1) \oplus \OO_L(-b)$.
\end{proof}
Observe that the fact that a bundle associated to a nearly free curve has only two possible splitting types was already proved by Abe and Dimca in \cite[Theorem 5.3]{AD}. However they did not determine the set of jumping lines, and ask in \cite[Remark 2.6]{AD}  whether a line of the arrangement has the generic splitting type. As an immediate consequence of the previous result we answer their question.
\begin{Corollary}\label{one-generic}
Let $\TT_C$ be a nearly free vector bundle associated to an arrangement of lines. Then, at least one of the lines of the arrangement has the generic splitting type.
\end{Corollary}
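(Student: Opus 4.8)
The plan is to deduce the statement directly from Proposition~\ref{NFSplitting}, once we observe that a nearly free arrangement is never a pencil. By Proposition~\ref{NFSplitting} the generic (minimal-gap) splitting type of $\TT_C$ is $\OO_L(-a)\oplus\OO_L(-b+1)$, and every line $L$ with $P\notin L$ carries this splitting type (when $a=b$ this is in fact the splitting type of \emph{every} line, so the corollary is then trivial). Hence it suffices to exhibit one line of the arrangement $C$ that does not pass through the jumping point $P$.

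I would argue by contradiction, assuming that every line of $C$ passes through $P$. Then $C$ is a pencil of concurrent lines and therefore free (it is classical that a pencil of lines is a free arrangement, see e.g.\ \cite{OT}), so $\TT_C$ is a direct sum of two line bundles. But \eqref{NFdef} presents $\TT_C$ as the quotient of $\OO_{\PP^2}(-a)\oplus\OO_{\PP^2}(-b)^2$ by a sub-line-bundle isomorphic to $\OO_{\PP^2}(-b-1)$; since every first cohomology group of a line bundle on $\PP^2$ vanishes, this extension splits, so $\OO_{\PP^2}(-a)\oplus\OO_{\PP^2}(-b)^2\cong\OO_{\PP^2}(-b-1)\oplus\TT_C$. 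This is impossible by uniqueness of the decomposition of a bundle on $\PP^2$ into line bundles, because the twist $b+1$ does not occur on the left-hand side (recall $a\le b$). The contradiction produces a line $L$ of $C$ with $P\notin L$, and Proposition~\ref{NFSplitting} gives ${\TT_C}_{|L}\cong\OO_L(-a)\oplus\OO_L(-b+1)$, the generic splitting type --- which is exactly the assertion.

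The proof is short, and the one step that needs a little attention --- hence what I would regard as the main (mild) obstacle --- is the implication ``nearly free $\Rightarrow$ not free'' used above. I settle it by the splitting plus Krull--Schmidt argument, but one could instead compare first non-vanishing degrees of global sections (which equals $a$ for a nearly free $\TT_C$, too small to match any split bundle with the prescribed Chern classes), or simply invoke the definition of Dimca and Sticlaru \cite{DS}, in which the free and nearly free classes are disjoint by construction. Every other ingredient is an immediate consequence of Proposition~\ref{NFSplitting}.
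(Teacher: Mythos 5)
Your proof is correct and follows essentially the same route as the paper: both reduce the statement, via Proposition~\ref{NFSplitting}, to showing that the arrangement cannot consist entirely of lines through the jumping point $P$, and both rule this out because a pencil of concurrent lines is free whereas a nearly free bundle is not. The only difference is that you make explicit the step ``nearly free $\Rightarrow$ not free'' (splitting the defining sequence \eqref{NFdef} and invoking Krull--Schmidt), which the paper leaves implicit.
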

\begin{proof}
The previous theorem tells us that a nearly free vector bundle is either stable with no jumping lines (it is the tangent bundle) or its jumping lines are the ones passing through a fixed point. It is well known that a vector bundle associated to an arrangement of $N$ lines through a fixed point is a free vector bundle with exponents $(0,N-1)$, therefore it cannot be our original $\TT_C$.
\end{proof}
Then we obtain a characterization of nearly free vector bundles.
\begin{Proposition}\label{prop-detpoint}
Given a point $P \in \PP^2$ and a couple of integers $(a,b)\in \NN^2$ with $ a\le b$, there exists, up to isomorphism, one and only one nearly free vector bundle with exponents $(a,b)$ whose pencil of jumping lines has $P$ as base point. Moreover, we can think the matrix $M$, defining the nearly free vector bundles in Definition \ref{nf-def}, as
$$
{}^tM = [x,y,z^{b-a+1}].
$$
\end{Proposition}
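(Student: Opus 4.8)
The plan is to obtain existence, uniqueness and the normal form of $M$ from a single device: comparing a resolution of the bundle with the Koszul resolution of $\II_P$. Fix homogeneous coordinates $x,y,z$ so that $P=[0:0:1]$; then $\II_P=(x,y)$, and twisting the Koszul complex on $x,y$ by $\OO_{\PP^2}(-b+1)$ gives an exact sequence
\[
0\longrightarrow \OO_{\PP^2}(-b-1)\longrightarrow\OO_{\PP^2}(-b)^2\longrightarrow\II_P(-b+1)\longrightarrow 0,
\]
whose left-hand map is, up to an automorphism of $\OO_{\PP^2}(-b)^2$, the column $[x,y]$.

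For existence, I would take $M$ with ${}^tM=[x,y,z^{b-a+1}]$, viewed as a map $\OO_{\PP^2}(-b-1)\to\OO_{\PP^2}(-b)^2\oplus\OO_{\PP^2}(-a)$, and set $\TT:=\coker M$. Since $b-a+1\ge 1$, the entries $x,y,z^{b-a+1}$ have no common zero on $\PP^2$, so $M$ is a nowhere-vanishing bundle inclusion; hence $\TT$ is locally free of rank two and admits a resolution of type (\ref{NFdef}), i.e.\ it is a nearly free vector bundle with exponents $(a,b)$. Composing $M$ with the projection $\OO_{\PP^2}(-b)^2\oplus\OO_{\PP^2}(-a)\to\OO_{\PP^2}(-b)^2$ recovers the left-hand map of the Koszul sequence, so there is a morphism of short exact sequences from the defining resolution of $\TT$ to that Koszul sequence (the identity on $\OO_{\PP^2}(-b-1)$, the projection on the middle terms); the snake lemma then produces
\[
0\longrightarrow\OO_{\PP^2}(-a)\longrightarrow\TT\longrightarrow\II_P(-b+1)\longrightarrow 0.
\]
By Theorem \ref{first-thm} and the definition of the jumping point, the point $P$ occurring here is the jumping point of $\TT$; equivalently, by Proposition \ref{NFSplitting}, the pencil of jumping lines of $\TT$ is the pencil of lines through $P$. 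This establishes existence.

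For uniqueness and the shape of $M$, let $\TT'$ be any nearly free vector bundle with exponents $(a,b)$ and jumping point $P$. By Theorem \ref{first-thm}, together with Proposition \ref{NFSplitting} to identify the point appearing there with the jumping point $P$, there is an exact sequence $0\to\OO_{\PP^2}(-a)\to\TT'\to\II_P(-b+1)\to 0$. Form the fibre product $\mathcal F$ of $\TT'\twoheadrightarrow\II_P(-b+1)$ with the Koszul surjection $\OO_{\PP^2}(-b)^2\twoheadrightarrow\II_P(-b+1)$. Then $\mathcal F$ sits in $0\to\OO_{\PP^2}(-b-1)\to\mathcal F\to\TT'\to 0$ and in $0\to\OO_{\PP^2}(-a)\to\mathcal F\to\OO_{\PP^2}(-b)^2\to 0$; the second splits since $\Ext^1(\OO_{\PP^2}(-b)^2,\OO_{\PP^2}(-a))=H^1(\OO_{\PP^2}(b-a))^{2}=0$, so $\mathcal F\cong\OO_{\PP^2}(-a)\oplus\OO_{\PP^2}(-b)^2$. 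Hence $\TT'$ is the cokernel of a map $\OO_{\PP^2}(-b-1)\stackrel{M'}{\longrightarrow}\OO_{\PP^2}(-a)\oplus\OO_{\PP^2}(-b)^2$ whose $\OO_{\PP^2}(-b)^2$-component is the Koszul column $[x,y]$ and whose $\OO_{\PP^2}(-a)$-component is a form $g$ of degree $b-a+1$, well defined only modulo $(x,y)$ (the indeterminacy being the choice of splitting, an element of $\Hom(\OO_{\PP^2}(-b)^2,\OO_{\PP^2}(-a))$). If $g\in(x,y)$, a suitable splitting makes $g=0$ and forces $\TT'\cong\OO_{\PP^2}(-a)\oplus\II_P(-b+1)$, which is not locally free; so $g$ has nonzero $z^{b-a+1}$-coefficient, and rescaling the $\OO_{\PP^2}(-a)$ summand and changing the splitting we may take $g=z^{b-a+1}$. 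Reordering the summands gives ${}^tM'=[x,y,z^{b-a+1}]$, so $\TT'$ is isomorphic to the bundle built above. When $a=b$ there are no jumping lines, and the same computation gives ${}^tM=[x,y,z]$, i.e.\ the Euler sequence, so the unique bundle is $T_{\PP^2}(-b-1)$, in accordance with Section \ref{sec-NFVB}.

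The step I expect to require the most care is the normalisation of $g$: one must check that $g$ is well defined exactly modulo $(x,y)$, that rescaling together with a change of splitting is precisely enough to reduce $g$ to $z^{b-a+1}$ without disturbing the linear entries $x,y$, and that the failure of local freeness when $g\in(x,y)$ really rules that case out. If one only wants uniqueness (without exhibiting $M$), a quicker route is to compute, from $0\to\II_P\to\OO_{\PP^2}\to\OO_P\to 0$, that $\Ext^1(\II_P(-b+1),\OO_{\PP^2}(-a))\cong\CC$ — using $H^1(\OO_{\PP^2}(b-1-a))=0$ and $H^2(\OO_{\PP^2}(b-1-a))=0$, the latter where $a\le b$ enters — so that a nearly free $\TT'$ corresponds to a nonzero, hence (by Serre's criterion) locally free, extension class, and any two such differ by a scalar; thus $\TT'$ is unique up to isomorphism.
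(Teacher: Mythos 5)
Your argument is correct, and it covers everything the statement asks for; it is organized differently from the paper's proof, so a comparison is worth recording. The paper proves uniqueness in one line by computing $\Ext^1(\II_P(-b+1),\OO_{\PP^2}(-a))\simeq H^1(\II_P(a-b-2))^{\vee}\simeq\CC$ via Serre duality (exactly the ``quicker route'' you sketch at the end, though you compute the same group from $0\to\II_P\to\OO_{\PP^2}\to\OO_P\to 0$ and add the useful observation that the nonzero class is automatically locally free because $\Ext^1\to H^0(\mathcal{E}xt^1)$ is injective here); it then obtains the normal form of $M$ by asserting that, once $P=\{x=y=0\}$, the matrix reads $[x,\;y,\;z^{b-a+1}+xh_0+yh_1]^t$ and conjugating by the explicit automorphism $A=\left[\begin{smallmatrix}1&0&0\\0&1&0\\-h_0&-h_1&1\end{smallmatrix}\right]$. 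Your fibre product of $\TT'\twoheadrightarrow\II_P(-b+1)$ with the Koszul surjection $\OO_{\PP^2}(-b)^2\twoheadrightarrow\II_P(-b+1)$, plus the vanishing of $\Ext^1(\OO_{\PP^2}(-b)^2,\OO_{\PP^2}(-a))$, is a cleaner derivation of the same presentation: it actually justifies why the $\OO_{\PP^2}(-b)^2$-block of $M$ may be taken to be the Koszul column $[x,y]$ (a point the paper passes over quickly), and your normalization of $g$ modulo $(x,y)$ --- including ruling out $g\in(x,y)$ because the cokernel would be $\OO_{\PP^2}(-a)\oplus\II_P(-b+1)$ --- is the same computation as the paper's conjugation by $A$, phrased as a change of splitting. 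Two cosmetic remarks: the degree bookkeeping forces the entry $z^{b-a+1}$ to sit against the $\OO_{\PP^2}(-a)$ summand and $x,y$ against $\OO_{\PP^2}(-b)^2$ (an ordering the paper itself is loose about, and which your ``reordering the summands'' handles); and in the case $a=b$ the ``pencil of jumping lines'' is empty, so the point $P$ plays no role and the unique bundle is $T_{\PP^2}(-b-1)$ for every $P$, as you correctly note.
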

\begin{proof}
We have that, due to Theorem \ref{first-thm}, $\TT_C$ corresponds to an element of $$\Ext^1(\II_P(-b+1),\OO_{\PP^2}(-a))\simeq H^1(\II_P(-b+a-2))^\lor.$$ Using Serre duality, and having supposed that $b\geq a$, we have that $h^1(\II_P(-b+a-2))=1$, therefore we have a unique non trivial extension.

Moreover, by a change of coordinates we can choose a simple presentation of $\TT_C$. Indeed, given the short exact sequence in (\ref{NFx}), we can apply a change of coordinates such that the point $P$ is defined by $\{x=y=0\}$. This means that the matrix $M$ defining the bundle $\TT_C$ is of the form
$$
M = [x, \: y, \: z^{b-a+1} + h]^t
$$
with $h = xh_0 + yh_1$, where $h_0$ and $h_1$ are homogeneous polynomials of degree $b-a$ in the coordinates $x,y,z$. Indeed, the third column of the matrix is a homogeneous form $f$ of degree $b-a+1$ in the variables $x,y,z$. Because of the fact that $M$ defines a vector bundle, we get that $z^{b-a+1}$ must be a summand of $f$, or else $M$ would vanish when evaluated at the point $(0:0:1)$. The part $h$ describes the other summands, which necessaly have a $x$ or a $y$ in each summand; notice that $h$ can be zero. This implies that we have the following commutative diagram
$$
\xymatrix{
0 \ar[r] & \OO_{\PP^2}(-b-1) \ar[d]^{1} \ar[rr]^(0.4){M} & & \OO_{\PP^2}(-a) \oplus \OO_{\PP^2}(-b)^2 \ar[r] \ar[d]^{A} & \TT_C \ar[r]\ar[d]^{\simeq} & 0\\
0 \ar[r] & \OO_{\PP^2}(-b-1)  \ar[rr]^(0.4){[x,\: y,\: z^{b-a+1}]^t} & & \OO_{\PP^2}(-a) \oplus \OO_{\PP^2}(-b)^2 \ar[r]  & \tilde{\TT}_C \ar[r] & 0
}
$$
with $A = \left[
\begin{array}{ccc}
1 & 0 & 0 \\
0 & 1 & 0 \\
-h_0 & -h_1 & 1
\end{array}
\right]$.
\end{proof}

Let us now prove how the special configuration of the jumping lines observed before actually characterizes a nearly free vector bundle, in the non stable case.
\begin{Theorem}\label{characterization-by-jl}
Let $P\in \PP^2$ be a point  and $E$ be a rank $2$ vector bundle on $\PP^2$ that we can assume to be normalized, $c_1(E)=0$ or $c_1(E)=-1$. Assume that $S(E)=\{L, L\ni P\}$ and that $o(L)=1$ for any $L\in S(E)$. Then, we have the following options
\begin{itemize}
\item if $E$ is either unstable or semistable then  $E$ is a nearly free vector bundle;
\item if $E$ is stable, then $c_1(E)=-1$, $c_2(E)=4$ and it is defined by the resolution
$$
0 \longrightarrow \OO_{\PP^2}(-4) \stackrel{A}{\longrightarrow} \OO_{\PP^2}(-1) \oplus \OO_{\PP^2}(-2)^2\longrightarrow E \longrightarrow 0,
$$
where, after choosing $P = (0:0:1)$, $A=(f(x,y,z),x^2,y^2))$ for any cubic form $f$ not passing through $P$.
\end{itemize}
\end{Theorem}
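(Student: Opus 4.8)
I would split the proof according to the stability of $E$, proving along the way that the stable case forces $c_1(E)=-1$.

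\emph{Unstable and strictly semistable cases.} Here $E$ is not $\mu$-stable, so it admits a sub-line bundle of maximal degree, giving
$$0\longrightarrow \OO_{\PP^2}(d)\longrightarrow E\longrightarrow \II_W(c_1-d)\longrightarrow 0$$
with $W$ a $0$-dimensional (possibly empty) subscheme and $2d\ge c_1$. Restricting to a line $L$ and keeping track of the torsion $\operatorname{Tor}_1(\OO_W,\OO_L)$ (whose length $r$ equals that of $W\cap L$), one finds ${E}_{|L}\simeq\OO_L(d+r)\oplus\OO_L(c_1-d-r)$; hence the jumping lines are exactly the lines meeting $W$, and such a line has order $r$. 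Since $S(E)=\check P$ is nonempty, $W$ is nonempty; and since every jumping line has order $1$, $W$ can contain neither two distinct points (their join would have order $\ge 2$) nor a nonreduced point (a tangent line to it would have order $\ge 2$). So $W$ is a single reduced point, which must be $P$ since the jumping lines are the lines through it. Then $0\to\OO_{\PP^2}(d)\to E\to\II_P(c_1-d)\to 0$ is the exact sequence of Theorem \ref{first-thm}, so $E$ is a nearly free vector bundle.

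\emph{Stable case.} Here I would blow up $P$: let $\nu\colon X\to\PP^2$ be the blow-up, $e$ the exceptional divisor, and $\pi\colon X\to\check P\simeq\PP^1$ the morphism whose fibers $f$ are the strict transforms of the lines through $P$; recall $e^2=-1$, $f^2=0$, $e\cdot f=1$ and $\nu^*\OO_{\PP^2}(1)=\OO_X(f+e)$. Since every line through $P$ is a jumping line of order $1$ and, by Grauert--Mülich, the generic splitting type of the $\mu$-semistable $E$ is balanced, we get ${E}_{|L}\simeq\OO_L(1)\oplus\OO_L(c_1-1)$ for every $L\ni P$; hence $\tilde E:=\nu^*E$ has constant splitting type $(1,c_1-1)$ on every fiber of $\pi$. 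As $c_1-1<1$, the fiberwise positive summands generate a saturated sub-line bundle $\mathcal L\subset\tilde E$, necessarily $\mathcal L=\OO_X(af+e)$ for some $a\in\ZZ$, with line bundle quotient $\mathcal M:=\tilde E/\mathcal L$. Pushing $0\to\mathcal L\to\tilde E\to\mathcal M\to 0$ forward along $\nu$ (using $R^1\nu_*\tilde E=0$, $\nu_*\OO_X(ke)=\OO_{\PP^2}$ and $R^1\nu_*\OO_X(ke)=0$ for $k\ge 0$, and $\nu_*\OO_X(-ke)=\II_P^{\,k}$), one gets $\nu_*\mathcal L=\OO_{\PP^2}(a)\hookrightarrow E$; as $E$ is stable this subsheaf has slope $<\mu(E)$, so $a\le -1$, and away from $P$ it is a subbundle, so
$$0\longrightarrow \OO_{\PP^2}(a)\longrightarrow E\longrightarrow \II_Z(c_1-a)\longrightarrow 0$$
for a $0$-dimensional $Z$ supported at $P$. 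Restricting to a line $L\not\ni P$ (which misses $Z$) gives ${E}_{|L}\simeq\OO_L(a)\oplus\OO_L(c_1-a)$, so $\delta(E)=|2a-c_1|$; combined with $a\le -1$, with $\mu$-stability ($a<c_1/2$), and with $\delta(E)\le 1$ (Grauert--Mülich), this forces $c_1=-1$ and $a=-1$.

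With $a=-1$ and $c_1=-1$ the pushforward computes $\mathcal M=\OO_X(-2e)$, $\nu_*\mathcal M=\II_P^{\,2}$, $R^1\nu_*\mathcal M=0$, and $R^1\nu_*\mathcal L$ a length-one sheaf at $P$; so $E/\OO_{\PP^2}(-1)=\II_Z=\ker(\II_P^{\,2}\twoheadrightarrow\OO_P)$, whence $\II_P^{\,3}\subseteq\II_Z\subseteq\II_P^{\,2}$ with $\II_Z$ of colength $1$ in $\II_P^{\,2}$. Thus $\operatorname{length}Z=\operatorname{length}(\OO_{\PP^2}/\II_P^{\,2})+1=4$, so $c_2(E)=4$, and $Z\supseteq V(\II_P^{\,2})$ forces $Z$ to have embedding dimension $2$. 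Dualizing $0\to\OO_{\PP^2}(-1)\to E\to\II_Z\to 0$ and using that $E$ and $\OO_{\PP^2}(-1)$ are locally free yields a surjection $\OO_{\PP^2}(1)\twoheadrightarrow\mathcal{E}xt^1(\II_Z,\OO_{\PP^2})\simeq\mathcal{E}xt^2(\OO_Z,\OO_{\PP^2})\simeq\omega_Z\otimes\omega_{\PP^2}^{-1}$; comparing lengths, $\omega_Z$ is an invertible $\OO_Z$-module, i.e. $Z$ is Gorenstein. By the classification of Artinian Gorenstein algebras, a $0$-dimensional Gorenstein scheme of length $4$ and embedding dimension $2$ has Hilbert function $(1,2,1)$, hence is graded and, up to a linear change of coordinates on $\PP^2$ with $P=(0:0:1)$, is cut out by $\II_Z=(x^2,y^2)$. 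Feeding the Koszul resolution of $\II_Z$ into the Horseshoe Lemma applied to $0\to\OO_{\PP^2}(-1)\to E\to\II_Z\to 0$ then gives
$$0\longrightarrow \OO_{\PP^2}(-4)\stackrel{A}{\longrightarrow}\OO_{\PP^2}(-1)\oplus\OO_{\PP^2}(-2)^2\longrightarrow E\longrightarrow 0$$
with $A=(f,x^2,y^2)$ for some cubic $f$, and $f(P)\ne 0$ since otherwise all entries of $A$ vanish at $P$ and $E$ fails to be locally free there.

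The essential difficulty is the stable case, and within it: the relative Harder--Narasimhan step producing the sub-line bundle $\mathcal L$ on the Hirzebruch surface, the bookkeeping of the direct images $R^i\nu_*$ of line bundles on $X$ (which fixes the length of $Z$ and hence $c_2$), and the final identification of $Z$ via the Gorenstein condition; the bound $\delta(E)\le 1$ from Grauert--Mülich is precisely what excludes $c_1(E)=0$ and pins down $a$. The non-stable case is comparatively routine once one notices that the order-one hypothesis forces $W$ to be a single reduced point.
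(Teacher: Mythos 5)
Your treatment of the non-stable case is essentially the paper's argument and is fine. The stable case, however, has a genuine gap at exactly the point where the whole difficulty of the theorem lies: the claim that for a line $L\not\ni P$ one has $E_{|L}\simeq \OO_L(a)\oplus\OO_L(c_1-a)$, hence $\delta(E)=|2a-c_1|$. Restricting $0\to\OO_{\PP^2}(a)\to E\to \II_Z(c_1-a)\to 0$ to such an $L$ does give a short exact sequence $0\to\OO_L(a)\to E_{|L}\to\OO_L(c_1-a)\to 0$, but this extension splits only when $H^1(\OO_L(2a-c_1))=0$, i.e.\ when $2a-c_1\ge -1$; for a stable $E$ with $a\le -1$ this holds only in the single case $a=-1$, $c_1=-1$ that you are trying to isolate. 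For $a\le -2$ (or for $c_1=0$) the restricted extension is generally non-split and $E_{|L}$ can perfectly well be balanced, so no contradiction with Grauert--M\"ulich arises. (This is the usual phenomenon for stable bundles: the maximal sub-line bundle does not compute the generic splitting type.) Consequently your argument does not exclude $c_1(E)=0$, nor the cases $a\le -2$, and these exclusions are precisely where the paper invests its effort: for $c_1=0$ it exhibits a $2$-secant line to $\Lambda$ (a conic avoiding $P$ meeting $n\ge2$ lines through $P$) not passing through $P$, and for $c_1=-1$, $n\ge 3$ it runs a determinantal/Thom--Porteous count to produce a jumping line off $P$. Some replacement for these arguments is indispensable.

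Two further remarks. First, once $a=-1$, $c_1=-1$ is granted, your identification of $Z$ via the pushforward bookkeeping and the Gorenstein/Macaulay classification of length-$4$ schemes with Hilbert function $(1,2,1)$ is a genuinely different, and arguably cleaner, route than the paper's (which shows directly that $\Gamma$ is a complete intersection of two products of $n$ linear forms through $P$ and then uses that a base-point-free pencil of binary quadrics contains two squares); that part of your write-up is sound. Second, the statement asserts that \emph{any} cubic $f$ with $f(P)\ne 0$ in $A=(f,x^2,y^2)$ yields a bundle with $S(E)=\check{P}$ and all orders equal to $1$; the paper proves this converse by checking that $x^2_{|L}$, $y^2_{|L}$ are linearly independent exactly when $L\not\ni P$, and your proposal omits this direction entirely.
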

\begin{proof}
Let us consider  the integer $a \in \ZZ$ such that $h^0(E(a)) \neq 0$ and $h^0(E(a-1))=0$. It implies that we have a short exact sequence 
$$
0 \longrightarrow \OO_{\PP^2} \longrightarrow E(a) \longrightarrow \II_Z(2a+c_1(E)) \longrightarrow 0,
$$
where  $Z\subset \PP^2$ is a 0-dimensional scheme of length $c_2(E(a))$.

$\bullet$ If $E$ is not stable, i.e. $a\leq0$, then for every line $L$ such that $L\cap Z = \emptyset$, we have that $E_{|L} \simeq \OO_L(-a) \oplus \OO_L(a+c_1(E))$ which gives $\delta(E)=|2a-c_1(E)|$. Indeed $E_{|L}$ corresponds to an element of $\Ext^1(\OO_L(a+c_1(E)), \OO_L(-a)) \simeq H^1(\OO_L(-2a-c_1(E)))=0$. Moreover, each line $L$ such that $L \cap Z \neq \emptyset$ is a jumping line for the bundle $E$ since the surjective map $E_{|L} \rightarrow \OO_L(a-1+c_1(E))$ induced by the restriction to $L$ gives $\delta_L(E)>\delta(E)$. Therefore, since we have assumed that any jumping line $L$ contains the point $P$ and has $o(L)=1$, $Z$ is the simple point $P \in \PP^2$, or else we would have at least one line with a bigger jump. This forces $E$ to be nearly free.

$\bullet$ Let us suppose now that $E$ is stable and consider the following diagram
$$
\xymatrix{
\tilde{\PP}^2 \ar[d]_p \ar[r]^q & \check{P}\\
\PP^2
}
$$
where $\tilde{\PP}^2$ denotes the blow-up of the projective plane along $P$ and $\check{P}$ the pencil of all lines passing through $P$. Recall that, from our assumptions, we have that $E_{|H} \simeq \OO_H\oplus \OO_H(c_1(E))$ for the generic line $H$ and for a jumping line $L$, i.e.  passing through $P$, we have $E_{|L} \simeq \OO_L(1)+\OO_L(-1+c_1(E))$. In particular this means that, for any line $L\in \check{P}$, we have $h^0(E_{|L}(-1))=1$, which implies that $q_*p^*E(-1)$ is an invertible sheaf on $\check{P}$, which we will denote by $\OO_{\check{P}}(-n)$. Therefore, we have a nonzero morphism 
$$
q^*\OO_{\check{P}}(-n) \stackrel{\phi\neq 0}{\longrightarrow} p^*E(-1).
$$
This morphism does not vanish at any point; indeed, if it does, it would also vanish at some point in at least one fiber $q^{-1}(L)$. On the other hand, we know that its restriction to $q^{-1}(L)$ is of the form (see \cite[page 53]{OSS})
$$
\phi_{q^{-1}(L)} : \OO_L \longrightarrow E_L(-1).
$$
Since, by assumption, for any $L\ni P$, we have $E_{|L}(-1) \simeq \OO_L \oplus \OO_L(-2 + c_1(E))$, the morphism $\phi_{q^{-1}(L)}$ cannot vanish.\\ 
Then, we obtain the following exact sequence
$$
0 \longrightarrow q^*\OO_{\check{P}}(-n) \longrightarrow p^*E(-1) \longrightarrow p^*\OO_{\PP^2}(-2 + c_1(E)) \otimes q^*\OO_{\check{P}}(n) \longrightarrow 0
$$
and hence
$$
0 \longrightarrow q^*\OO_{\check{P}} \longrightarrow p^*E(-1)\otimes q^*\OO_{\check{P}}(n) \longrightarrow p^*\OO_{\PP^2}(-2 + c_1(E)) \otimes q^*\OO_{\check{P}}(2n) \longrightarrow 0.
$$
Using the projection formula and recalling that $p_*q^*\OO_{\check{P}}(n) \simeq \II_P^n(n)$ (see \cite[Section 4]{FV}),  we obtain the following commutative diagram
$$
\xymatrix{
0 \ar[r] & \OO_{\PP^2} \ar[d]_{\simeq} \ar[r] & E \otimes \II_P^n(n-1) \ar[d] \ar[r] &
\II_P^n(2n-2+c_1(E)) \ar[d] \ar[r] &0\\
0 \ar[r] & \OO_{\PP^2} \ar[r] & E(n-1) \ar[r] & \II_{\Gamma}(2n-2+c_1(E)) \ar[r] & 0
}
$$
which forces $n\ge 2$ because, having supposed $E$ to be stable, we have that $h^0(E(t))=0$ for $t \leq 0$ (see again \cite[page 132]{B}). The diagram is constructed considering the composition of the injective sheaf maps 
$$
\OO_{\PP^2} \longrightarrow E \otimes \II_P^n(n-1)
$$
and
$$
\II_P^n(n-1) \longrightarrow \OO_{\PP^2}(n-1)
$$
and we denote by $\Gamma$ the 0-dimensional scheme given by the zero locus of the section $s$ defined by the composition.\\
It means that $\Gamma$ contains the fat point of multiplicity $n$ defined by $\II_P^n$ and that the set theoretic support of $\Gamma$ is given by the point $P$.
This implies that $\Gamma$ is a local complete intersection supported on $P$, and therefore a global complete intersection. Moreover, because of the mentioned properties, the ideal defining $\Gamma$ is $(g,h)$, with $g$ and $h$ two homogeneous $n$-forms, each one product of $n$ linear forms, all representing lines passing through $P$.\\
As a consequence, we have the following two short exact sequences
$$
\xymatrix{
& &  & 0 \ar[d] &\\
& &  & \OO_{\PP^2} \ar[d]^s &\\
 &  &  & E(n-1) \ar[d] \\
0 \ar[r] & \OO_{\PP^2}(-2+c_1(E))\ar[r]  & \OO_{\PP^2}(n-2+c_1(E))^2  \ar[r]^{\alpha} & \II_\Gamma(2n-2 +c_1(E)) \ar[r] \ar[d]  & 0\\
& & & 0
}
$$
After that, we apply the functor $\Hom\left(\OO_{\PP^2}(n-2+c_1(E))^2,- \right)$ to the right column, and 
being $\Ext^1\left(\OO_{\PP^2}(n-2+c_1(E))^2,\OO_{\PP^2}\right)=0$, we can lift the morphism $\alpha$ to an element $\tilde{\alpha} \in \Hom\left(\OO_{\PP^2}(n-2+c_1(E))^2,E(n-1)\right)$. Combining it with the section $s\neq 0$, we obtain
$$
\xymatrix{
& & 0 \ar[d] & 0 \ar[d] &\\
& & \OO_{\PP^2} \ar[d]  \ar[r]^{\lambda} & \OO_{\PP^2} \ar[d] &\\
 &  & \OO_{\PP^2} \oplus \OO_{\PP^2}(n-2+c_1(E))^2\ar[d] \ar[r]^(0.65){(s,\tilde{\alpha})} & E(n-1) \ar[d] \\
0 \ar[r] & \OO_{\PP^2}(-2+c_1(E)) \ar[r] & \OO_{\PP^2}(n-2+c_1(E))^2  \ar[r] \ar[d]& \II_\Gamma(2n-2 +c_1(E)) \ar[r] \ar[d] & 0\\
& & 0 & 0 &
}
$$
with $\lambda\neq0$, which leads to the following commutative diagram
$$
\xymatrix{
& & 0 \ar[d] & 0 \ar[d] &\\
& & \OO_{\PP^2} \ar[d] \ar[r]^\simeq & \OO_{\PP^2} \ar[d] &\\
0 \ar[r] & \OO_{\PP^2}(-2+c_1(E)) \ar[r]^(0.4){[f,\: h,\:-g]^t} \ar[d]_{\simeq} & \OO_{\PP^2} \oplus \OO_{\PP^2}(n-2+c_1(E))^2\ar[d] \ar[r] & E(n-1) \ar[d] \ar[r] & 0\\
0 \ar[r] & \OO_{\PP^2}(-2+c_1(E)) \ar[r]^(0.5){[h,\: -g]^t} & \OO_{\PP^2}(n-2+c_1(E))^2  \ar[r] \ar[d]& \II_\Gamma(2n-2 +c_1(E)) \ar[r] \ar[d] & 0\\
& & 0 & 0 &
}
$$
Directly from the previous diagram, consider an element of $\Hom\left(\OO_{\PP^2}(n-2+c_1(E)),E(n-1)\right)$ and notice that $\Ext^1\left(\OO_{\PP^2}(n-2+c_1(E)),\OO_{\PP^2}(-2+c_1(E))\right)=0$, then it is possible to induce the following one
$$
\xymatrix{
& & 0 \ar[d] & 0 \ar[d]\\
& & \OO_{\PP^2}(n-2+c_1(E)) \ar[d] \ar[r]^\simeq & \OO_{\PP^2}(n-2+c_1(E)) \ar[d]\\
0 \ar[r] & \OO_{\PP^2}(-2+c_1(E)) \ar[r]^(0.4){[f,\: h,\:-g]^t} \ar[d]_{\simeq} & \OO_{\PP^2} \oplus \OO_{\PP^2}((n-2+c_1(E)))^2 \ar[d] \ar[r] & E(n-1) \ar[d] \ar[r] & 0\\
0 \ar[r] & \OO_{\PP^2}(-2+c_1(E)) \ar[r]^(0.4){[f,\:h]^t} & \OO_{\PP^2} \oplus \OO_{\PP^2}((n-2+c_1(E))) \ar[r] \ar[d]& \II_\Lambda(n) \ar[r] \ar[d] & 0\\
& & 0 & 0
}
$$
where $\Lambda:= \{f=h=0\}$ is a non empty 0-dimensional scheme of length $2n$ when $c_1(E)=0$ or $3n$ when $c_1(E)=-1$. Moreover, since the matrix $[f,\: h,\:-g]^t$ defines a vector bundle, $P\notin \mathrm{supp}(\Lambda)$. Tensoring the right column by $\OO_{\PP^2}(1-n)$ we get the short exact sequence
\begin{equation}\label{exact-lambda}
0 \longrightarrow \OO_{\PP^2}(-1+c_1(E)) \longrightarrow E \longrightarrow \II_\Lambda(1) \longrightarrow 0.
\end{equation}
Let us divide the study in two cases, $c_1(E)=0$ and $c_1(E)=-1$. 

Let us consider first $c_1(E)=0$.\\
Notice that $\Lambda$ is a complete intersection of a conic $f=0$ that does not contain $P$ and a curve $h=0$ consisting of $n\ge 2$ lines through $P$; moreover, we can suppose that the curve defined by $h$ contains at least two distinct lines (this can be done substituting $h$ with a linear combination of the two previous forms $g$ and $h$). Hence there exists a line $L$ such that $P\notin L$ and $|L\cap \Lambda|\ge 2$. 
Any line $L$ with $|L\cap \Lambda|\ge 2$ is necessarily a jumping line and this leads to contradiction because we would have found a jumping line not passing through $P$.

\smallskip

Let us now consider the case $c_1(E)=-1$.\\ 
In this case the bundle $E$ is defined by the following short exact sequence
\begin{equation}\label{eq-jump}
0 \longrightarrow \OO_{\PP^2}(-2-n) \longrightarrow \OO_{\PP^2}(1-n) \oplus \OO_{\PP^2}(-2)^2\longrightarrow E \longrightarrow 0.
\end{equation}
Our goal is to prove that the only possible case is $n=2$.\\
For $n=2$, taking $P=(0:0:1)$ one can assume that $(g,h)=(g(x,y),h(x,y))$ and more particularly $(g,h)=(x^2,y^2)$. Indeed, a pencil of conics in $\PP^1$ with no common factor always contains two squares.
After dualizing the sequence (\ref{eq-jump}) and tensoring by $\OO_{\PP^2}(-2)$ we obtain
$$
0 \longrightarrow E(-1) \longrightarrow \OO_{\PP^2}(-1) \oplus \OO_{\PP^2}^2\longrightarrow \OO_{\PP^2}(2)  \longrightarrow 0.
$$
A line $L$ is a jumping line for $E$ if and only if  $H^0(E_{|L}(-1))\neq 0$, which is equivalent for the map
$$
 H^0(\OO_{L}^2)\longrightarrow H^0(\OO_{L}(2)) 
$$
not to be injective.\\ 
If $L$ is a line not passing through $P$, i.e. defined by an equation of type $z = ax +by$, then the restrictions to $L$ of the quadratic forms $x^2$ and $y^2$ are linearly independent, which implies that the line $L$ is not a jumping line.\\
On the other hand, it is immediate to see that the restrictions of the two quadratic forms on any line passing through $P$ are linearly dependent, and therefore the line $L$ is a jumping line. 
\smallskip

Let us now finish the proof, showing that, if $n\geq 3$, we always have a jumping line that does not contain $P$, contradicting our assumption on $S(E)$.\\ 
After dualizing the sequence (\ref{eq-jump}) and tensoring by $\OO_{\PP^2}(-2)$ we obtain
$$
0 \longrightarrow E(-1) \longrightarrow \OO_{\PP^2}(n-3) \oplus \OO_{\PP^2}^2\longrightarrow \OO_{\PP^2}(n)  \longrightarrow 0.
$$
A line $L$ is a jumping line for $E$ if and only if  $H^0(E_{|L}(-1))\neq 0$, in other words if and only if the following map, between two vector spaces with dimension respectively $n$ and $n+1$,
$$
 H^0(\OO_{L}(n-3) \oplus \OO_{L}^2)\stackrel{M}\longrightarrow H^0(\OO_{L}(n)) 
$$
is not injective.\\ 
Let $g(x,y)=\sum_i \alpha_i x^{n-i}y^{i}$,  $h(x,y)=\sum_i \beta_i x^{n-i}y^{i}$ be two $n$ forms defined by $n$ lines through $P=(0:0:1)$  and $f(x,y,z)$ be a cubic form such that $f(0:0:1)\neq0$, i.e. $f=z^3+\cdots$. Since we are looking for a line $L$ that does not pass through $P=(0:0:1)$, we can assume that its equation is given by $z=ax+by$.\\ 
Substituting the equation of the line in the cubic form we have $f(x,y,ax+by)=\sum_{0\le i\le 3} \gamma_i(a,b)x^{3-i}y^{i}$ where $ \gamma_i(a,b)$ are degree 3 polynomials (non homogeneous). Notice that we must have $\gamma_0(a,b)=a^3+\cdots$ and $\gamma_3(a,b)=b^3+\cdots$. Therefore, the matrix $M$ has the following form: 
$$M = \left[
\begin{array}{cccccc}
\alpha_0 & \beta_0 & \gamma_0(a,b) &              &           & \\
\alpha_1 & \beta_1 & \gamma_1(a,b) & \gamma_0(a,b)&            &\\
\alpha_2 & \beta_2 & \gamma_2(a,b) &\gamma_1(a,b) &  \ddots    &\\
\alpha_3 & \beta_3 & \gamma_3(a,b) & \gamma_2(a,b)&   \ddots   &\gamma_0(a,b)\\
\vdots & \vdots    &            & \gamma_3(a,b)&   \ddots   &\gamma_1(a,b)\\ 
\vdots & \vdots    &              &  & \ddots   &\gamma_2(a,b)\\ 
\alpha_n & \beta_n &             &           &     & \gamma_3(a,b)
\end{array}
\right].$$
Since the $g$ and $h$ vanish simultaneously only at the point $P$, we have necessarly that $\alpha_0 \cdot \beta_n \neq 0$ or $\alpha_n \cdot \beta_0 \neq 0$ and, without loss of generality we can assum to be in the first case. 
By linear combination of lines and the first two columns, we see that this matrix is equivalent to 
$ \left[
\begin{array}{cc}
I_2 & *\\
0 & N \\
\end{array}
\right]$, where $I_2=\left[
\begin{array}{cc}
1 & 0\\
0 & 1 \\
\end{array}
\right]$ and $N$ is a $(n-1)\times(n-2)$ matrix of degree $3$ polynomials in $(a,b)$. Consider $N_h = N_h(a,b,c)$ the ``homogenization'' of $N$ in order to have a matrix of cubic forms. Thom-Porteous formula tells us that the scheme where the rank of $N$ is less than $n-2$ (which implies that the rank of $M$ is less than $n$) is a scheme of length 
$\frac{(n-2)(n-1)}{2}3^2$ when finite, or it contains a curve.\\ 
We will now show that in either case we have a solution of type $(a:b:1)$, which corresponds to a jumping line not passing through $P$.\\
If it is finite, then it cannot have more than $3n-2$ points at the infinity line, given by the degree of each maximal minor of $N_h(a,b,0)$ (which corresponds to the restriction at the infinity line). Notice that 
$$
3n-2 < \frac{(n-2)(n-1)}{2}3^2 \:\:\mbox{ when } \:\: n>2,
$$
which shows that we must have at least a point in the affine plane where the rank of $N$, and therefore $M$, drops and this corresponds to a jumping line not passing through $P$.\\
If it contains a curve, with no solutions of type $(a:b:1)$, then all points of type $(a:b:0)$ are a solution.\\
In this case, 
we get that $\rk N_h(a,b,0)<n-2$ for all $(a,b)\neq (0,0)$, which implies that 
$$
\rk \left[
\begin{array}{cc}
I_2 & *_h(a,b,0)\\
0 & N_h(a,b,0)\\
\end{array}
\right]< n.
$$
Applying the inverse transformation that sent $M$ to the upper triangular block form, we get that
$$
\rk \left[
\begin{array}{cccccc}
\alpha_0 & \beta_0 & a^3 &              &           & \\
\alpha_1 & \beta_1 & 3a^2b & a^3&            &\\
\alpha_2 & \beta_2 & 3ab^2 &3a^2b &  \ddots    &\\
\alpha_3 & \beta_3 & b^3 & 3ab^2&   \ddots   &a^3\\
\vdots & \vdots    &            & b^3&   \ddots   &3a^2b\\ 
\vdots & \vdots    &              &  & \ddots   &3ab^2\\ 
\alpha_n & \beta_n &             &           &     & b^3
\end{array}
\right]< n.
$$
Notice that such matrix is exactly the matrix $M$ when we choose $f=z^3$. This would imply that all lines of $\PP^2$ are jumping lines for the vector bundle defined by
$$
0 \longrightarrow \OO_{\PP^2}(-2-n) \stackrel{[g,h,z^3]}{\longrightarrow} \OO_{\PP^2}(1-n) \oplus \OO_{\PP^2}(-2)^2\longrightarrow E \longrightarrow 0
$$
This gives a contradiction, because a vector bundle $E$ defined in this way is stable and, according to the Grauert-M\"ulich theorem (see \cite[page 206]{OSS}), its splitting type is balanced on the generic line.


\end{proof}

\begin{Corollary}
Consider a family of 0-dimensional complete intersection schemes $\Lambda = (\lambda g + \mu h, f)$, with $g=g(x,y)$, $h = h(x,y)$ of degree $n$ with no common factor, and a cubic form $f$ such that $f(0,0,1) \neq 0$, parametrized by $(\lambda:\mu)\in \PP^1$.\\
Then, it exists an element $(\lambda_0 :\mu_0)\in \PP^1$ such that the complete intersection $\Lambda_0 = (\lambda_0 g + \mu_0 h, f)$ has at least one 3-secant line not passing through $(0:0:1)$.
\end{Corollary}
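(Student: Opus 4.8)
The plan is to read this statement as the geometric shadow of the last part of the proof of Theorem~\ref{characterization-by-jl} and to deduce it directly from that argument. Fix $P=(0:0:1)$. Given $g=g(x,y),h=h(x,y)$ of degree $n$ with no common factor and a cubic $f$ with $f(0,0,1)\neq 0$, the first step is to attach to these data the rank two bundle $E$ on $\PP^2$ defined by
\[
0\longrightarrow \OO_{\PP^2}(-2-n)\stackrel{[f,g,h]^t}{\longrightarrow}\OO_{\PP^2}(1-n)\oplus\OO_{\PP^2}(-2)^2\longrightarrow E\longrightarrow 0,
\]
which is exactly sequence~\eqref{eq-jump}. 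Since $g,h$ vanish simultaneously only at $P$ and $f(P)\neq 0$, the three forms $f,g,h$ have no common zero, so the cokernel is locally free; as in Theorem~\ref{characterization-by-jl}, $E$ is a stable bundle with $c_1(E)=-1$ and $c_2(E)=3n-2$.

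The key step would be to set up the dictionary: a line $L\not\ni P$ is a jumping line of $E$ if and only if there is $(\lambda_0:\mu_0)\in\PP^1$ such that $L$ is a $3$-secant of $\Lambda_0=(\lambda_0 g+\mu_0 h,\,f)$. Indeed, restricting the dual resolution $0\to E(-1)\to\OO_{\PP^2}(n-3)\oplus\OO_{\PP^2}^2\stackrel{[f,g,h]}{\to}\OO_{\PP^2}(n)\to 0$ to $L$ (still exact, $\OO_{\PP^2}(n)$ being locally free), $L$ is a jumping line precisely when the multiplication map $H^0(\OO_L(n-3)\oplus\OO_L^2)\to H^0(\OO_L(n))$ by $f,g,h$ restricted to $L$ fails to be injective, exactly as in the proof of Theorem~\ref{characterization-by-jl}. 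A nonzero element $(p,q,r)$ of its kernel with $(q,r)\neq(0,0)$ satisfies $f_{|L}\mid (qg+rh)_{|L}$; since $f_{|L}$ is a nonzero binary cubic, $L\cap\Lambda_{(q:r)}$ then contains the length $3$ scheme $\{f_{|L}=0\}$, so $L$ is a $3$-secant of $\Lambda_{(q:r)}$. Conversely, a $3$-secant of some $\Lambda_{(\lambda_0:\mu_0)}$ forces $f_{|L}$ to divide $(\lambda_0 g+\mu_0 h)_{|L}$, producing such a kernel element. The degenerate cases (the cubic $\{f=0\}$ containing a line, or a kernel element with $q=r=0$) only make the conclusion easier, since any line of $\{f=0\}$ automatically avoids $P$.

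Finally I would invoke the computation at the end of the proof of Theorem~\ref{characterization-by-jl}: for $n\geq 3$ the bundle $E$ above always admits a jumping line not passing through $P$. That argument --- dualising~\eqref{eq-jump}, row-reducing the multiplication matrix $M$ attached to a line $\{z=ax+by\}$ to an upper triangular block form with an identity block $I_2$ and a residual $(n-1)\times(n-2)$ block $N$ of cubics in $(a,b)$, applying the Thom--Porteous formula to the degeneracy locus of $N$ and using the numerical inequality $3n-2<\tfrac{9}{2}(n-1)(n-2)$ valid for $n>2$ (with the ``$N$ drops rank along a curve'' alternative excluded via the Grauert--M\"ulich theorem) --- uses only the shape of $E$ and no hypothesis on its jumping locus. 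Combining it with the dictionary, the resulting jumping line $L_0\not\ni P$ yields $(\lambda_0:\mu_0)\in\PP^1$ for which $L_0$ is a $3$-secant of $\Lambda_0=(\lambda_0 g+\mu_0 h,\,f)$ not passing through $(0:0:1)$, which is the claim.

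The only genuinely new work here is the dictionary of the second paragraph; the substantive analytic input, the Thom--Porteous count, is already available from Theorem~\ref{characterization-by-jl}, so I expect no serious obstacle beyond the bookkeeping in identifying jumping lines of $E$ off $P$ with $3$-secants of the members of the pencil $\{\Lambda_{(\lambda:\mu)}\}$ and checking the degenerate cases. The one point to keep in mind --- implicit in the ambient setting --- is that one needs $n\geq 3$: for $n=2$, say $(g,h)=(x^2,y^2)$ with generic $f$, every $\Lambda_{(\lambda:\mu)}$ consists of three points on each of two lines through $P$ and has no $3$-secant avoiding $P$, matching the exceptional stable bundle of Theorem~\ref{characterization-by-jl}.
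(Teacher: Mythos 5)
Your proof is correct and follows essentially the same route as the paper: both translate the existence of a $3$-secant avoiding $P=(0:0:1)$ into the existence of a jumping line, avoiding $P$, of the bundle defined by $[f,g,h]$, and then invoke the Thom--Porteous computation from the proof of Theorem \ref{characterization-by-jl}; your version of the dictionary is in fact more explicit than the paper's one-line justification. Your remark that $n\ge 3$ is needed (and is only implicit in the statement) is also well taken, since for $n=2$, e.g. $(g,h)=(x^2,y^2)$ with generic $f$, the claim fails.
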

\begin{proof}
The existence of a $\Lambda_0$ with a 3-secant line, not passing through $P$, is equivalent to have a jumping line, again not passing through $P$, for the vector bundle defined by
$$
0 \longrightarrow \OO_{\PP^2}(-2-n) \stackrel{A}{\longrightarrow} \OO_{\PP^2}(1-n) \oplus \OO_{\PP^2}(-2)^2\longrightarrow E \longrightarrow 0
$$
with $A = [f,g,h]$. Indeed, considering the sequence \eqref{exact-lambda}, relating $\Lambda_0$ and $E$, a 3-secant line $L$ to $\Lambda_0$ is clearly a jumping line for $E$.\\ 
Conversely, let $L$ be jumping line not passing through $P$. Since there is a pencil $\Lambda$ of complete intersections, $L$ meets at least one element $\Lambda_0$ of the pencil. Considering again the exact sequence \eqref{exact-lambda} defining $\Lambda_0$, and its restriction to $L$, we must have $|L\cap \Lambda_0|=3$.

\end{proof}
\subsection{Restriction on one line} In many cases we can decide if an arrangement is free by computing the Chern classes of its logarithmic associated vector bundle and determining its splitting type on one line. This can be done thanks to \cite[Corollary 2.12]{EF}. Indeed their result, concerning vector bundles on any $\PP^n$, implies in particular that a rank two vector bundle $E$ over $\PP^2$ such that $c_1(E)=-a-b$ and $c_2(E)=ab$ is the free bundle $\OO_{\PP^2}(-a) \oplus \OO_{\PP^2}(-b)$ if and only if there exists  one line $L$ such that $E|_{L}=\OO_{L}(-a) \oplus \OO_{L}(-b) $. 

We give now a similar statement  for nearly free vector bundles. Thanks to this we can, knowing its splitting type on one line, determine if a vector bundle is nearly free or not.
\begin{Proposition} \label{splitting}
  Let $E$ be a rank-$2$ vector bundle on $\PP^2$ and assume
  $c_1(E)=-r$ for some $r\ge 0$ and $c_2(E)=1$.
  Then, the following are equivalent:
  \begin{enumerate}
  \item \label{splits} the bundle $E$ is nearly free with exponents $(0,r+1)$,
  \item \label{one line} there is a line $L$ of $\PP^2$ such that $E|_{L} \simeq \OO_L
    \oplus \OO_L(-r)$.
  \end{enumerate}
\end{Proposition}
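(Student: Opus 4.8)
I would establish the two implications in turn; the reverse one is the one with content. Suppose first $E$ is nearly free with exponents $(0,r+1)$. Applying Theorem~\ref{first-thm} with $a=0$ and $b=r+1$ produces a point $P\in\PP^2$ and an exact sequence $0\to\OO_{\PP^2}\to E\to\II_P(-r)\to0$, and Proposition~\ref{NFSplitting} then gives $E_{|L}\simeq\OO_L\oplus\OO_L(-r)$ for every line $L$ not passing through $P$; any such line proves \ref{one line}.

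Conversely, fix a line $L$ with $E_{|L}\simeq\OO_L\oplus\OO_L(-r)$. By Theorem~\ref{first-thm} it is enough to construct a point $P$ with $0\to\OO_{\PP^2}\to E\to\II_P(-r)\to0$, and I claim this reduces to showing $H^0(E)\neq0$. Indeed, tensoring $0\to\OO_{\PP^2}(-1)\to\OO_{\PP^2}\to\OO_L\to0$ with $E(-k)$ and using $H^0(E(-k)_{|L})=H^0(\OO_L(-k)\oplus\OO_L(-r-k))=0$ for $k\ge1$, one gets $H^0(E(-k))\simeq H^0(E(-k-1))$ for all $k\ge1$, hence $H^0(E(-k))=0$ for every $k\ge1$ (it vanishes for $k\gg0$). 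Consequently no nonzero section of $E$ vanishes along a curve, so a nonzero $s\in H^0(E)$ vanishes on a $0$-dimensional subscheme of length $c_2(E)=1$, that is on a single point $P$, and $\coker(s)\cong\II_P(-r)$.

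It remains to produce a global section, which is the heart of the matter, and I would argue by cases on stability. If $E$ is not semistable, let $\OO_{\PP^2}(k)\hookrightarrow E$ be its maximal destabilizing sub-line bundle, so $2k>c_1(E)=-r$; being saturated, its cokernel is $\II_W(-r-k)$ with $\mathrm{length}(W)=c_2(E)-k(-r-k)=1+kr+k^2\ge0$. On a general line the sequence $0\to\OO_L(k)\to E_{|L}\to\OO_L(-r-k)\to0$ splits (as $2k+r\ge1$), so the generic gap of $E$ is $2k+r$; since $L$ realizes the gap $r$ and the generic gap is minimal, $2k+r\le r$, i.e.\ $k\le0$. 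Feeding $k\le0$ into $1+kr+k^2\ge0$ together with $2k+r\ge1$ rules out $k<0$, so $k=0$, whence $\OO_{\PP^2}\hookrightarrow E$ and in fact $W$ is already the point we wanted. If $E$ is semistable, Bogomolov's inequality $4c_2(E)-c_1(E)^2=4-r^2\ge0$ forces $r\le2$: for $r=0$ one has $\chi(E)=1$ and $h^2(E)=h^0(E^{\vee}(-3))=h^0(E(-3))=0$, so $h^0(E)\ge1$; for $r=2$ the twist $E(1)$ is semistable with $c_1=0$, $c_2=0$, hence $E(1)\cong\OO_{\PP^2}^2$ and $E\cong\OO_{\PP^2}(-1)^2$, whose restriction to a line has gap $0$, contradicting \ref{one line}; and $r=1$ is the delicate case, where $E$ is stable with $c_1=-1$, $c_2=1$, so $E\cong T_{\PP^2}(-2)$ and must be handled on its own.

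The step I expect to be the main obstacle is precisely this production of a section in the semistable range: there is no destabilizing line bundle available to restrict, and what makes the argument close is the interplay between the prescribed splitting type on $L$ and the constraint $c_2(E)=1$, which through Bogomolov confines $E$ to the short list above, with the tangent bundle $T_{\PP^2}(-2)$ as the lone configuration that requires a separate inspection.
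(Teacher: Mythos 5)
Your argument is correct where it is carried out, but it follows a genuinely different route from the paper's. The paper works with the minimal twist $t$ such that $H^0(E(t))\neq 0$: the hypothesis on $L$ forces $t\ge 0$, the case $t=0$ gives the conclusion at once, and the remaining cases $t=r>0$ and $t>r$ are eliminated (or reduced to the tangent bundle) using the inequality $c_2(E(t))=t(t-r)+1\ge 0$ together with a count of $h^0(\II_W(2t-r-1))$. You instead split by stability: in the unstable case the maximal destabilizing sub-line bundle $\OO_{\PP^2}(k)$, the length count $1+kr+k^2\ge 0$, and the comparison of the generic gap $2k+r$ with the gap $r$ realized on $L$ force $k=0$ --- a clean and complete argument; in the semistable case Bogomolov confines you to $r\le 2$, and your treatments of $r=0$ (Riemann--Roch plus $h^2(E)=h^0(E(-3))=0$) and $r=2$ (forcing $E\cong\OO_{\PP^2}(-1)^2$, excluded by the splitting on $L$) are both correct. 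Your preliminary reduction showing $H^0(E(-k))=0$ for $k\ge 1$ via restriction to $L$ is also a tidier version of the paper's opening remark that $t<0$ is impossible.

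The one case you leave open, $r=1$ with $E$ stable, hence $E\cong T_{\PP^2}(-2)$, is a genuine loose end --- but it is a loose end of the statement rather than of your method. Indeed $T_{\PP^2}(-2)$ restricts to $\OO_L\oplus\OO_L(-1)$ on every line, so it satisfies condition (2) with $r=1$, yet $h^0(T_{\PP^2}(-2))=0$ while a nearly free bundle with exponents $(0,2)$ has $h^0=1$ by its resolution; so $T_{\PP^2}(-2)$ is nearly free only with exponents $(1,1)$ (the twisted Euler sequence), not $(0,r+1)$. The paper's proof reaches exactly the same bundle (their case $t=r=1$) and concludes only that ``$E$ is again the tangent bundle which is nearly free,'' silently dropping the claimed exponents. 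To finish in the spirit of the paper you need only add the sentence identifying $T_{\PP^2}(-2)$ as nearly free via the Euler sequence, with the caveat that for $r=1$ the exponents in (1) must be read as $(1,1)$ in this exceptional case.
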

\begin{proof}
  Condition \ref{splits} clearly implies \ref{one line}.
  It remains to show that \ref{one line} implies
  \ref{splits}.
  
  Let $t$ be the smallest integer such that $H^0(\PP^2,E(t)) \ne 0$. If $t<0$ it is clear that there is no line $L$
  such that $E|_{L} \simeq \OO_L
    \oplus \OO_L(-r)$. Then we have $t \ge 0$.
  Also, it is well-known (cf. \cite[Lemmas 1 and 2]{B}) that any non-zero global section $s$ of $E(t)$
  vanishes along a subscheme $W$ of $\PP^2$ of codimension $\ge 2$ and
  of length:
  \begin{equation}
    \label{c2}
  c_2(E(t))=t(t-r)+1 \ge 0.
  \end{equation}
  We have an exact sequence:
  \[
  0 \to \OO_{\PP^2} \stackrel{s}{\rightarrow} E(t) \to \II_W(2t-r) \to 0.
  \]
  So $t=0$ would imply $W = \{P\}$ is one point and for any value of $r$, $E$ is nearly free with exponents $(0,r+1)$. 
  
  Hence, we assume $t>0$. Since $ c_2(E(t))=t(t-r)+1 \ge 0$ it implies $t=r$ or $t>r$.
  Let us consider first the case $t=r>0$.  Since $c_2(E(r))=1$ the scheme $W$ is a single point $P$. Since  
  $H^0(\PP^2,\II_P(r-1)) =H^0(\PP^2,\, E(-1))=0$ this implies $r=1$ and $E$ is again the tangent bundle which is nearly free.
  Since $h^0(E(-1))=0$ we have $h^0(\PP^2,\II_W(2t-r-1))=\binom{2t-r+1}{2}-t(t-r)-1\le 0$ ; when $t>r$ this never occurs. 
  \end{proof}
\section{Addition and deletion}\label{sec-addel}
In this section we study the behaviour of an arrangement obtained by deleting or adding a line to a free arrangement. In particular, we characterize when the obtained arrangement is nearly free and in this case we describe its pencil of jumping lines. The description we present will recover some results of Section 5 in \cite{AD} (see in particular Theorems 5.7, 5.10 and 5.11).

In the following proposition we describe how to construct a nearly free vector bundle deleting a line, satisfying specific properties, from a free arrangement. This process is known as \emph{deletion}.

\begin{Proposition}\label{PropDel}
A rank two vector bundle $E$ is nearly free with exponents $(a,b)$ if and only if it can be constructed as an extension in $\Ext^1(\OO_L(-b),\OO_{\PP^2}(-a) \oplus \OO_{\PP^2}(-b))$ where $L$ is a line and $a,b$ are integers such that $0\le a\le b$.\\ 
Moreover, considering any vector bundle $E$ given by an element of $\Ext^1(\OO_L(-t),\OO_{\PP^2}(-a) \oplus \OO_{\PP^2}(-b))$ where $L$ is a line and $t,a,b$ are integers such that $0\le a\le b$, then $E$ is a nearly free vector bundle if and only if $t = b$, which forces its exponents to be $(a,b)$.\\
\end{Proposition}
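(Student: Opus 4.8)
My plan is to convert extensions into three-term resolutions by a pullback, and then to compare resolutions. Suppose $E$ arises from a class in $\Ext^1(\OO_L(-t),\OO_{\PP^2}(-a)\oplus\OO_{\PP^2}(-b))$, where $L=\{\ell=0\}$ and $0\le a\le b$, giving
$$0\longrightarrow\OO_{\PP^2}(-a)\oplus\OO_{\PP^2}(-b)\longrightarrow E\longrightarrow\OO_L(-t)\longrightarrow 0.$$
I pull this back along the quotient $\OO_{\PP^2}(-t)\twoheadrightarrow\OO_L(-t)$, whose kernel is $\OO_{\PP^2}(-t-1)\stackrel{\ell}{\to}\OO_{\PP^2}(-t)$. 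Since $\Ext^1(\OO_{\PP^2}(-t),\OO_{\PP^2}(-a)\oplus\OO_{\PP^2}(-b))=H^1(\OO_{\PP^2}(t-a))\oplus H^1(\OO_{\PP^2}(t-b))=0$, the pulled-back extension splits, and the snake lemma yields
$$0\longrightarrow\OO_{\PP^2}(-t-1)\stackrel{N}{\longrightarrow}\OO_{\PP^2}(-a)\oplus\OO_{\PP^2}(-b)\oplus\OO_{\PP^2}(-t)\longrightarrow E\longrightarrow 0,$$
the $\OO_{\PP^2}(-t)$-component of $N$ being $\ell$. When $t=b$ this is a resolution of the shape \eqref{NFdef} with exponents $(a,b)$ (here $0\le a\le b$ is used), so $E$ is nearly free with exponents $(a,b)$; this proves the ``if'' direction of the first assertion and the ``$t=b\Rightarrow$ nearly free'' half of the ``moreover''.

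For the converse of the first assertion, let $E$ be nearly free with exponents $(a,b)$. By Theorem \ref{first-thm} there is a point $P$ with $0\to\OO_{\PP^2}(-a)\to E\to\II_P(-b+1)\to 0$. Pick a line $L\ni P$ with equation $\ell$; then $\ell$ is a section of $\II_P(1)$, and a short diagram chase (comparing $0\to\II_P(1)\to\OO_{\PP^2}(1)\to\OO_P\to 0$ with multiplication by $\ell$) gives $0\to\OO_{\PP^2}(-b)\stackrel{\ell}{\to}\II_P(-b+1)\to\OO_L(-b)\to 0$. Composing $E\twoheadrightarrow\II_P(-b+1)\twoheadrightarrow\OO_L(-b)$, the kernel $K$ of the composite fits in $0\to\OO_{\PP^2}(-a)\to K\to\OO_{\PP^2}(-b)\to 0$, which splits because $\Ext^1(\OO_{\PP^2}(-b),\OO_{\PP^2}(-a))=0$; hence $E$ is an extension in $\Ext^1(\OO_L(-b),\OO_{\PP^2}(-a)\oplus\OO_{\PP^2}(-b))$. (Alternatively this follows by splitting off $\OO_{\PP^2}(-a)$ and one $\OO_{\PP^2}(-b)$ from the normal form ${}^tM=[x,y,z^{b-a+1}]$ of Proposition \ref{prop-detpoint}.)

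The remaining, and hardest, implication is that a vector bundle $E$ coming from a class in $\Ext^1(\OO_L(-t),\OO_{\PP^2}(-a)\oplus\OO_{\PP^2}(-b))$ with $0\le a\le b$ which is nearly free must have $t=b$. I argue from the three-term resolution, according to whether it is minimal, i.e.\ whether all entries of $N$ lie in the irrelevant ideal. Non-minimality occurs exactly when $t=a-1$ and the $\OO_{\PP^2}(-a)$-entry of $N$ is a nonzero scalar, or $t=b-1$ and the $\OO_{\PP^2}(-b)$-entry is; then that scalar entry splits off an isomorphic summand and $E\cong\OO_{\PP^2}(-\alpha)\oplus\OO_{\PP^2}(-\beta)$ is decomposable. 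But a decomposable rank-two bundle with nonnegative exponents is never nearly free: by Theorem \ref{first-thm} it would yield a length-one free resolution $0\to\OO_{\PP^2}(-a')\to\OO_{\PP^2}(-\alpha)\oplus\OO_{\PP^2}(-\beta)\to\II_P(-b'+1)\to 0$ with free modules of ranks $(2,1)$, while the minimal (Koszul) resolution $0\to\OO_{\PP^2}(-b'-1)\to\OO_{\PP^2}(-b')^2\to\II_P(-b'+1)\to 0$ also has ranks $(2,1)$, so the two coincide, forcing $a'=b'+1$, contradicting $a'\le b'$. Hence the resolution is minimal; applying $H^0_*(\PP^2,-)$ (and using that $H^1_*$ of line bundles on $\PP^2$ vanishes) turns it into a graded minimal free resolution of the module $H^0_*(E)$. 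Since $E$ is nearly free, \eqref{NFdef} in the normal form of Proposition \ref{prop-detpoint} is another one, so by uniqueness of minimal free resolutions the two coincide; comparing free modules and using $a\le b$, $a'\le b'$ gives $(a',b')=(a,b)$ and $t=b'=b$.

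The main obstacle is this last step: one must correctly isolate the degenerate extension classes that yield a split bundle, and then justify the passage from sheaf resolutions to graded modules and the appeal to uniqueness of minimal free resolutions. An alternative to the module-theoretic ending is numerical: the three-term resolution gives $c_1(E)=1-a-b$ and $c_2(E)=ab-a-b+t+1$ for every $t$; once the case $t<a$ is excluded (it again forces $E$ to be non-locally-free or split), the least twist with sections equals $a$, pinning the smaller exponent, after which $c_2(E)=ab-a+1$ for a nearly free bundle forces $t=b$.
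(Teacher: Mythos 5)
Your proof is correct, and for the crucial ``moreover'' implication it takes a genuinely different route from the paper. The first equivalence is handled similarly in spirit (the paper extracts the extension directly from the resolution \eqref{NFdef} via the snake lemma, whereas you go through Theorem \ref{first-thm} and the surjection $\II_P(-b+1)\twoheadrightarrow\OO_L(-b)$ for a line $L\ni P$; both are fine, and your pullback along $\OO_{\PP^2}(-t)\twoheadrightarrow\OO_L(-t)$ is exactly how one recovers the three-term resolution). The real divergence is in showing $t=b$ is forced: the paper dualizes the extension, observes that surjectivity of $\OO_{\PP^2}(a)\oplus\OO_{\PP^2}(b)\to\OO_L(t+1)$ restricts $t$ to $t=a-1$ or $t\ge b-1$, then eliminates $t=a-1,\,b-1$ by a freeness criterion from \cite{FV} and $t>b$ by the splitting-type bound of Proposition \ref{NFSplitting}; you instead compare the three-term resolution coming from the extension with the minimal resolution ${}^tM=[x,y,z^{b'-a'+1}]$ of Proposition \ref{prop-detpoint}, using uniqueness of graded minimal free resolutions of $H^0_*(E)$, after disposing of the non-minimal case by showing it forces $E$ to split and that a split bundle is never nearly free. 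The paper's route is more geometric (it also immediately yields the triple-point interpretation and the local freeness constraint on $t$), while yours is more algebraic and self-contained, avoiding both \cite[Proposition 5.2]{FV} and the splitting-type analysis, and it pins down the exponents in one stroke. Two small polishing remarks: your Koszul-rank comparison showing that $\OO_{\PP^2}(-\alpha)\oplus\OO_{\PP^2}(-\beta)$ is never nearly free can be replaced by the one-line computation $c_2(E(a'))=0\neq 1$ for the minimal twist with sections; and you should say explicitly that the residual degenerate possibility in the minimal case (an entry of $N$ identically zero in degree $\le 0$, e.g. $N=(0,0,\ell)$) is excluded because it contradicts local freeness of $E$, which is a standing hypothesis of the statement.
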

\begin{proof}
Let us consider a nearly free vector bundle defined by the resolution (\ref{NFdef}). Therefore we can choose an injective map $\OO_{\PP^2}(-a)\oplus \OO_{\PP^2}(-b) \longrightarrow E$ which gives us the following commutative diagram
\begin{equation}\label{diagDel}
\xymatrix{
& & 0 \ar[d] & \ar[d] 0 \\
& & \OO_{\PP^2}(-a) \oplus \OO_{\PP^2}(-b)\ar[d] \ar[r]^\simeq & \OO_{\PP^2}(-a)\oplus \OO_{\PP^2}(-b)\ar[d]\\
0 \ar[r] & \OO_{\PP^2}(-b-1) \ar[r] \ar[d]_\simeq & \OO_{\PP^2}(-a) \oplus \OO_{\PP^2}(-b)^2 \ar[d] \ar[r] & E \ar[d]\ar[r] & 0\\
0 \ar[r] & \OO_{\PP^2}(-b-1) \ar[r]  &  \OO_{\PP^2}(-b) \ar[d] \ar[r] &\OO_L(-b) \ar[d]\ar[r] & 0\\
& & 0 & 0
}
\end{equation}
Focus on the left column and let us discuss its geometrical meaning in the arrangement. Let $L\in C$ where $C$ is a free arrangement such that $\TT_{C}=\OO_{\PP^2}(-a)\oplus \OO_{\PP^2}(-b)$. According to \cite[Proposition 5.1]{FV},  we have a short exact sequence
$$
0 \longrightarrow \OO_{\PP^2}(-a) \oplus \OO_{\PP^2}(-b) \longrightarrow \TT_{C\backslash\{L\}} \longrightarrow \OO_L(-t) \longrightarrow 0
$$
where $t$ counts the number of triple points in the line $L$.
If we suppose $\TT_{C \backslash \{L\}}$ to be nearly free with exponents $(a,b)$, we get $t = b$ by computing the second Chern classes for instance. It shows that we can construct a nearly free arrangement with exponents $(a,b)$ by deleting a line in a free arrangement with the same exponents when this line passes through exactly $b$ triple points. That is why this process is known as \emph{deletion}.

To prove the second part of Proposition \ref{PropDel}, consider a vector bundle $E$ fitting in the following short exact sequence
$$
0 \longrightarrow \OO_{\PP^2}(-a) \oplus \OO_{\PP^2}(-b) \longrightarrow E \longrightarrow \OO_L(-t) \longrightarrow 0.
$$
Considering its dual exact sequence, we get a surjective map $\OO_{\PP^2}(a) \oplus \OO_{\PP^2}(b) \rightarrow \OO_L(t +1)$, which forces us to have either $t = a-1$ or $t\geq b-1$. If $t=a-1$ or $t = b-1$ then $E$ would be free (see for instance \cite[Proposition 5.2]{FV}), therefore we can only focus on $t \geq b$. If $t > b$ then $E$ cannot be nearly free since the surjective restriction map 
$$
E_{|L} \longrightarrow \OO_L(-t) \longrightarrow 0
$$
 implies that the splitting type on $L$ for $E$ has gap bigger than allowed by Proposition \ref{NFSplitting}.\\
Finally if $t = b$, we can recover Diagram (\ref{diagDel}), which implies that $E$ is nearly free by its resolution.
\end{proof}
In the second part of the section we describe the second operation on the arrangement, dual to the previous one, which is known as \emph{addition}. Similarly to the previous case, its geometrical interpretation corresponds to adding a line passing through a specific number of triple points of the original arrangement.

\begin{Proposition}\label{PropAdd}
A rank two vector bundle $E$ is nearly free with exponents $(a+1,b+1)$ if and only if it can be constructed as the kernel of surjective map in $\Hom(\OO_{\PP^2}(-a)\oplus\OO_{\PP^2}(-b), \OO_{L}(1-a))$ where $L$ is a line and $a,b$ are integers such that $0\le a\le b$.\\ 
Moreover, considering any kernel $E$ of the surjective map given by an element in $\Hom(\OO_{\PP^2}(-a)\oplus\OO_{\PP^2}(-b), \OO_{L}(-t))$ where $L$ is a line and $t,a,b$ are integers such that $0\le a\le b$, then $E$ is a nearly free vector bundle if and only if $t = a-1$, which forces its exponents to be $(a+1,b+1)$. 
\end{Proposition}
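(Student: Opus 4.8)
\noindent The plan is to derive Proposition~\ref{PropAdd} from Proposition~\ref{PropDel} by Serre duality: ``addition'' of a line is the operation dual to ``deletion'', so everything should follow from applying $\mathcal{H}om(-,\OO_{\PP^2})$ to the short exact sequences produced there. The facts I will use repeatedly are the local computations, immediate from the Koszul resolution $0\to\OO_{\PP^2}(-1)\to\OO_{\PP^2}\to\OO_L\to0$, namely $\mathcal{H}om(\OO_L(d),\OO_{\PP^2})=0$, $\mathcal{E}xt^1(\OO_L(d),\OO_{\PP^2})\simeq\OO_L(1-d)$ and $\mathcal{E}xt^{i}(\OO_L(d),\OO_{\PP^2})=0$ for $i\ge 2$, together with the identity $E^\vee\simeq E(a+b+1)$ valid for any rank two bundle $E$ with $c_1(E)=-a-b-1$.

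For the direct implication, assume $E$ is nearly free with exponents $(a+1,b+1)$. Since $0\le a+1\le b+1$, Proposition~\ref{PropDel} applied at the exponents $(a+1,b+1)$ provides a line $L$ and an exact sequence
\[
0\longrightarrow \OO_{\PP^2}(-a-1)\oplus\OO_{\PP^2}(-b-1)\longrightarrow E\longrightarrow \OO_L(-b-1)\longrightarrow 0 .
\]
Dualizing it — using that $E$ is locally free, so $\mathcal{E}xt^1(E,\OO_{\PP^2})=0$, together with the computations above — gives
\[
0\longrightarrow E^\vee\longrightarrow \OO_{\PP^2}(a+1)\oplus\OO_{\PP^2}(b+1)\longrightarrow \OO_L(b+2)\longrightarrow 0 ,
\]
and twisting by $\OO_{\PP^2}(-a-b-1)$ and substituting $E^\vee\simeq E(a+b+1)$ turns this into
\[
0\longrightarrow E\longrightarrow \OO_{\PP^2}(-a)\oplus\OO_{\PP^2}(-b)\longrightarrow \OO_L(1-a)\longrightarrow 0 ,
\]
which exhibits $E$ as the kernel of a surjection onto $\OO_L(1-a)$, as desired.

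For the converse and the ``Moreover'' part I start from a surjection $\OO_{\PP^2}(-a)\oplus\OO_{\PP^2}(-b)\to\OO_L(-t)$ with kernel $E$. First I check that $E$ is a vector bundle: because $\OO_L(-t)$ has projective dimension one, chasing $\mathcal{E}xt^\bullet(-,\OO_{\PP^2})$ through the defining sequence shows $\mathcal{E}xt^{i}(E,\OO_{\PP^2})=0$ for $i\ge 1$, so $E$ is reflexive, hence locally free on the smooth surface $\PP^2$; moreover $c_1(E)=-a-b-1$ regardless of $t$. Dualizing the defining sequence exactly as above produces
\[
0\longrightarrow \OO_{\PP^2}(-a-1)\oplus\OO_{\PP^2}(-b-1)\longrightarrow E\longrightarrow \OO_L(t-a-b)\longrightarrow 0 ,
\]
i.e. it realizes $E$ as an extension of the shape occurring in Proposition~\ref{PropDel} at exponents $(a+1,b+1)$, with top term $\OO_L(-t')$ for $t'=a+b-t$. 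The second part of Proposition~\ref{PropDel} then says precisely that $E$ is nearly free if and only if $t'=b+1$, that is $t=a-1$, in which case its exponents are $(a+1,b+1)$; specializing to $t=a-1$ (so that $\OO_L(-t)=\OO_L(1-a)$) recovers the first assertion. I would also record the arrangement-theoretic reading, mirroring the one in the proof of Proposition~\ref{PropDel}: if $C$ is a free arrangement with $\TT_C=\OO_{\PP^2}(-a)\oplus\OO_{\PP^2}(-b)$ and $L\notin C$, the addition sequence (the dual of \cite[Proposition~5.1]{FV}) has the shape $0\to\TT_{C\cup\{L\}}\to\TT_C\to\OO_L(-t)\to0$ with $t$ governed by the number of triple points of $C$ lying on $L$, so the computation above pins down exactly when adjoining such a line yields a nearly free bundle, recovering \cite[Theorem~5.7]{AD}.

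I do not expect a conceptual obstacle, the statement being literally Serre dual to Proposition~\ref{PropDel}; the only delicate point is clerical, namely keeping every Serre twist and every degree shift consistent through the chain of dualizations and tensorings, and verifying that the hypothesis $0\le a\le b$ remains compatible with invoking Proposition~\ref{PropDel} at the shifted exponents $(a+1,b+1)$.
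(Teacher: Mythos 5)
Your proposal is correct and follows essentially the same route as the paper: both directions are obtained by dualizing the relevant exact sequence (the extension realization of Proposition \ref{PropDel}, resp.\ the defining sequence of the kernel) and invoking Proposition \ref{PropDel}, with the twist bookkeeping $t'=a+b-t$ turning $t'=b+1$ into $t=a-1$. Your version merely spells out the $\mathcal{E}xt$ computations and the local freeness of the kernel, which the paper leaves implicit.
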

\begin{proof}
If $\TT_C$ is nearly free, we can consider the right column in Diagram (\ref{diagDel}) and, taking its dual, we obtain what required.\\
In order to prove the other implication, we take a vector bundle who is defined by the short exact sequence
$$
0 \longrightarrow E \longrightarrow \OO_{\PP^2}(-a) \oplus \OO_{\PP^2}(-b) \stackrel{f}{\longrightarrow} \OO_L(-t) \longrightarrow 0
$$
Notice that in order for the map $f$ to be surjective, we must have $t = b$ or $t \leq a$. Once again, taking its dual and applying Proposition \ref{PropDel}, we get that $E$ is nearly free if and only if $t = a-1$.\\
As before $t$ can be interpreted as the number of triple points through which the line added in the arrangement must pass.
\end{proof}
We end this section relating the jumping point of a nearly free vector bundle with the two operations described above.
\begin{Proposition}
\label{45}
Let $\TT_C$ be a nearly free vector bundle whose associated arrangement $C$ is constructed adding or deleting a line $L$ in a free arrangement. Then the jumping point of $\TT_C$ belongs to the line $L$.
\end{Proposition}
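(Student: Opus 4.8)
The plan is to handle the two cases---deletion and addition of the line $L$---by restricting the defining exact sequence of $\TT_C$ to $L$ and reading off the splitting type $\TT_C|_L$. By Proposition~\ref{NFSplitting}, a line carries the \emph{unbalanced} splitting type if and only if it passes through the jumping point $P$, so it suffices to show that $L$ has the unbalanced type. Throughout I would use repeatedly that a subsheaf of a locally free sheaf on the smooth rational curve $L$ is again locally free, so that every kernel and cokernel produced on $L$ is a line bundle whose degree is dictated by additivity of $c_1$.

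For the deletion case, Proposition~\ref{PropDel} gives $0 \to \OO_{\PP^2}(-a)\oplus\OO_{\PP^2}(-b) \to \TT_C \to \OO_L(-b) \to 0$ with $\TT_C$ nearly free of exponents $(a,b)$. First I would tensor this with $\OO_L$: the $\mathcal{T}or_1(\TT_C,\OO_L)$ term vanishes, while $\mathcal{T}or_1(\OO_L(-b),\OO_L)\simeq\OO_L(-b-1)$, giving a four-term exact sequence on $L$ whose middle cokernel embeds into $\TT_C|_L$ and is therefore the line bundle $\OO_L(-a+1)$ by a degree count. There remains $0 \to \OO_L(-a+1)\to\TT_C|_L\to\OO_L(-b)\to 0$, which splits since the relevant $\Ext^1$ is $H^1(\OO_L(b-a+1))=0$; hence $\TT_C|_L\simeq\OO_L(-a+1)\oplus\OO_L(-b)$, exactly the splitting type of a jumping line for exponents $(a,b)$. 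The addition case runs dually: Proposition~\ref{PropAdd} gives $0 \to \TT_C \to \OO_{\PP^2}(-a)\oplus\OO_{\PP^2}(-b) \to \OO_L(1-a) \to 0$ with exponents $(a+1,b+1)$, and the same bookkeeping (now $\mathcal{T}or_1(\OO_L(1-a),\OO_L)\simeq\OO_L(-a)$) yields $\TT_C|_L\simeq\OO_L(-a)\oplus\OO_L(-b-1)$, again the jumping type for exponents $(a+1,b+1)$. In both cases Proposition~\ref{NFSplitting} then forces $P\in L$.

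A more transparent alternative for the deletion case is to pull out the subsheaf $\OO_{\PP^2}(-a)\hookrightarrow\TT_C$ appearing in the sequence and pass to the quotient: as $H^0(\TT_C(a-1))=0$, the associated section of $\TT_C(a)$ vanishes at a single point, so by Theorem~\ref{first-thm} the quotient is $\II_P(-b+1)$ with $P$ the jumping point, and the quotient of the deletion sequence reads $0 \to \OO_{\PP^2}(-b) \to \II_P(-b+1) \to \OO_L(-b) \to 0$. Twisting by $b-1$ exhibits a nonzero map $\OO_{\PP^2}(-1)\to\II_P$, i.e. a linear form $\ell\in H^0(\II_P(1))$, which therefore vanishes at $P$; comparing its cokernel $\OO_{\{\ell=0\}}(-1)$ with the twisted quotient $\OO_L(-1)$ forces $\{\ell=0\}=L$, whence $P\in L$.

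The main difficulty is not a deep point but careful bookkeeping: verifying that the sheaves obtained on $L$ after tensoring are honestly locally free (so their degrees, hence their isomorphism classes, are determined), and, more importantly, being precise that the computed splitting type is the jumping one rather than the generic one. These two coincide exactly when $a=b$, where $\TT_C\simeq T_{\PP^2}(-b-1)$ has no jumping lines at all and the statement is vacuous; I would flag this degenerate case explicitly and otherwise assume $a<b$.
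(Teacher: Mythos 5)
Your argument is correct, and your primary route is genuinely different from the paper's. The paper's own proof is a one-line appeal to Diagram (\ref{diagDel}): combined with Theorem \ref{first-thm} (or Proposition \ref{prop-detpoint}), that diagram shows that the line $L$ is cut out by a linear combination of the two linear entries of the resolution matrix $M$, and those entries are precisely the forms whose common zero is the jumping point $P$, so $P\in L$ is read off at a glance, with the addition case obtained by dualizing. Your second, ``more transparent'' variant is essentially this same argument made coordinate-free --- the map $\OO_{\PP^2}(-b)\to\II_P(-b+1)$ is a linear form through $P$ whose cokernel is supported on $L$ --- so it matches the paper in substance. Your first argument is different: you restrict the deletion/addition sequences to $L$, control the $\mathcal{T}or_1$ term and the torsion-freeness of the resulting cokernels on the smooth curve $L$, conclude that $\TT_C|_L$ has the unbalanced splitting type, and only then invoke Proposition \ref{NFSplitting}. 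This is sound (the degree counts and the vanishing $H^1(\OO_L(b-a+1))=0$ all check out) and has the small bonus of exhibiting $L$ explicitly as a jumping line, but it is longer than necessary and, as you correctly flag, degenerates when the two exponents coincide: there the two splitting types agree, $\TT_C$ is a twist of $T_{\PP^2}$, and the jumping point is not even uniquely determined (the paper silently ignores this case as well). Either of your two arguments would serve as a complete proof.
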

\begin{proof}
The result comes immediately from Diagram (\ref{diagDel}) for the deletion operation. For the addition, we consider the dual exact sequence of the one defining the bundle and again, we conclude using the same commutative diagram.
\end{proof}
\begin{Remark}
It is not always possible to add or to delete a line from a given free arrangement in order to find a nearly free arrangement. For instance it is not possible to find a nearly free arrangement by deleting a line from the Hesse arrangement ($12$ lines through the nine inflexion points of a smooth cubic curve) since there is no line containing $7$ triple points.

On the other hand, consider the following free arrangement with exponents $(4,4)$ consisting in two sets of four lines, the first one passing through the point $(1:0:0)$ and the second one through $(0:1:0)$ plus the infinity line, i.e. the line defined by the two previous points. Choose the  eight ``finite'' lines in order not to have three points, coming from the intersections, aligned. Then it is not possible to add a line to the previous arrangement that passes through three triple points, and therefore, by Proposition \ref{PropAdd}, it is not possible to obtain a nearly free arrangement starting from the free given one. 

In the following section we give a family of free arrangements from which we can always build a nearly free arrangement by deletion or addition.
\end{Remark}
\section{Nearly free arrangements obtained by addition and deletion from a free one}\label{sec-ex}
The previous sections give us necessary and sufficient conditions in order to construct a nearly free vector bundle with exponents $(a,b)$ starting from a free vector bundle and applying addition or deletion. Indeed, in this section we will show specific examples that realize such construction. Moreover, we will be able to determine which lines of the associated arrangement are jumping.

In order to determine the jumping order on lines of the given arrangement, we will use the multiarrangements introduced by Ziegler in \cite{Z}.

\smallskip

Let us recall some results about these multiarrangements on lines. Let $C$ be an arrangement of $N$ lines. Let $L\in C$. We denote by $n$ the number (without multiplicity) of intersection points on $L$ and $m_1\ge \cdots \ge m_n$ their multiplicities. We have of course $\sum_i m_i=N-1$. If there is  no triple point of $C$ on $L$ then $n=N-1$, if $L$ contains some triple points then $n< N-1$. Now, according to Case 2.1, Case 2.2 on page 3 and Theorem 3.1 in  \cite{WY},  we have:
\begin{itemize}
\item If $m_1\ge \sum_{i=2}^n m_i$ then the splitting type is $(\sum_{i=2}^n m_i, m_1)$.
\item If $2n-1\ge N$ then the splitting type on $L$ is $(N-n,n-1)$.
\item If $2n-1\le N$ then the splitting type is balanced when the $n$ intersection points are in general position but can be unbalanced for special positions.
\end{itemize}

Let us consider an arrangement $C_0$ of $a+b+1$ lines ($0\le a\le b$ as usual) consisting in one line at infinity, $b$ parallel lines, $a-1$ parallel lines in another direction and one isolated line containing $a-1$ triple points (see Figure \ref{fig5}). This arrangement is free with exponents $(a,b)$. Indeed the associated vector bundle has the Chern classes of 
$\OO_{\PP^2}(-a)\oplus \OO_{\PP^2}(-b)$ and the splitting type on a vertical line is $(a, b)$ since $n=b+1$.

\begin{figure}[ht!]
\begin{center}
\includegraphics[scale=0.44]{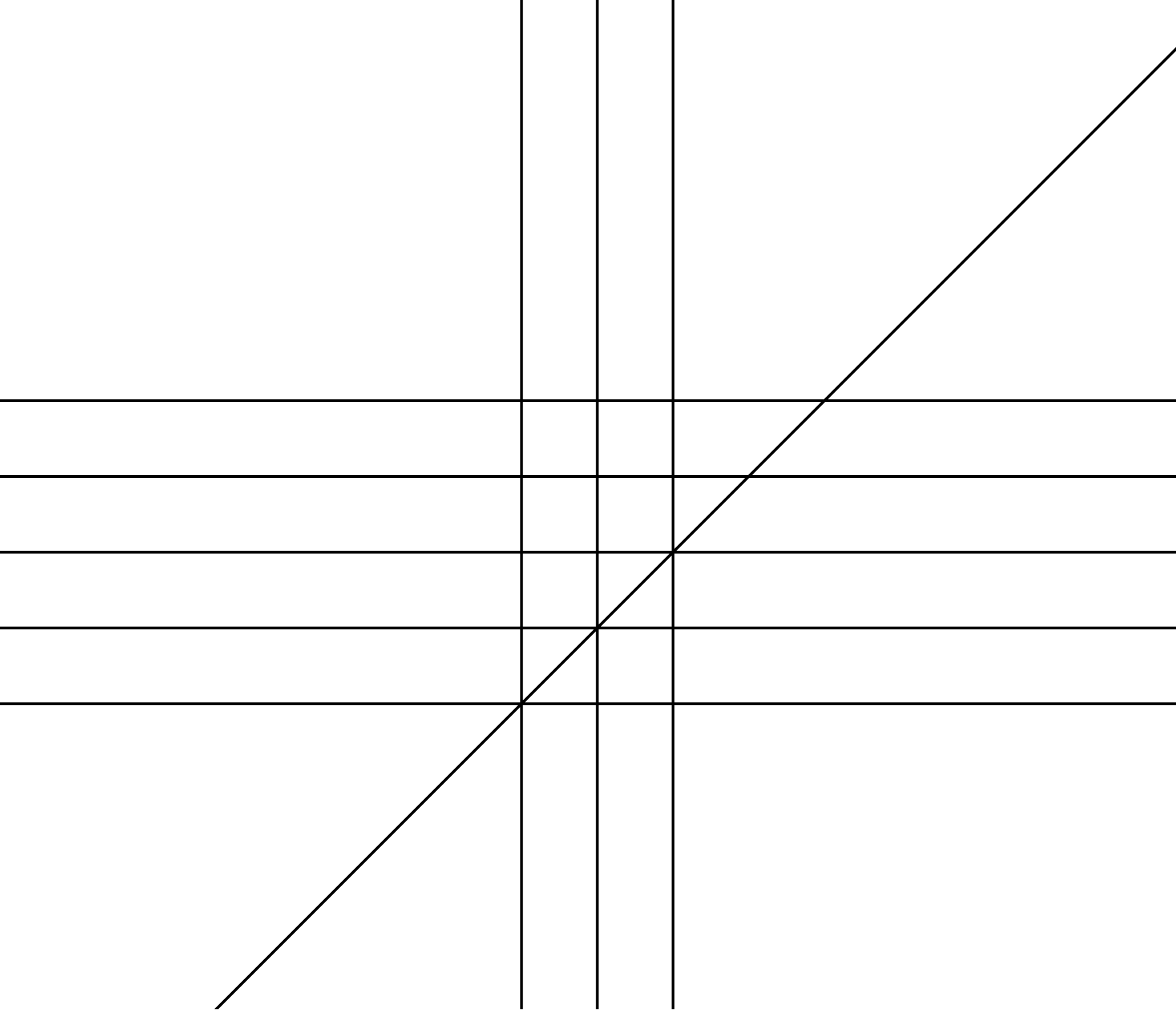}
\caption{}
\label{fig5}
\end{center}
\end{figure}

\subsection{Deletion}\label{eg-deletion}
A nearly free arrangement is obtained by deleting one line passing through $b$ triple points of the free arrangement $C_0$. The dotted line (see Figure \ref{fig6}) passes through $b-1$ triple points at infinity and one in the affine plane. Deleting this line we obtain a nearly free arrangement with exponents $(a,b)$ as it is proved in Proposition \ref{PropDel}.\\
The two red lines are the only lines of the arrangement that are jumping lines. Indeed the multiplicities of the multiarrangement on the diagonal line are $m_1=\cdots =m_{a-2}=2$ and 
$m_{a-1}=\cdots =m_{b+1}=1$. Then $n=b+1$ and since $2b+1> a+b$, the splitting type is $(a-1,b)$.\\
In the same way the multiplicities of the multiarrangement on the vertical red line are $m_1=a-1$ and $m_2=\cdots =m_{b+1}=1$. Then $n=b+1$ and  the splitting type is $(a-1,b)$.\\
On the contrary, since the multiplicities on a horizontal line that does not pass through $P$ and that does not contain a triple point out of infinity (it exists since $a\le b$) are $m_1=b-1$ and $m_2=\cdots =m_{a+1}=1$, its splitting type is $(a, b-1)$.\\
It shows that the generic splitting is $(a, b-1)$,  that the two lines through $P$ determine the jumping point, which will of course be $P$ itself, and that these two lines are the only jumping lines in $C$.

\medskip

\begin{figure}[ht!]
\begin{center}
\includegraphics[scale=0.44]{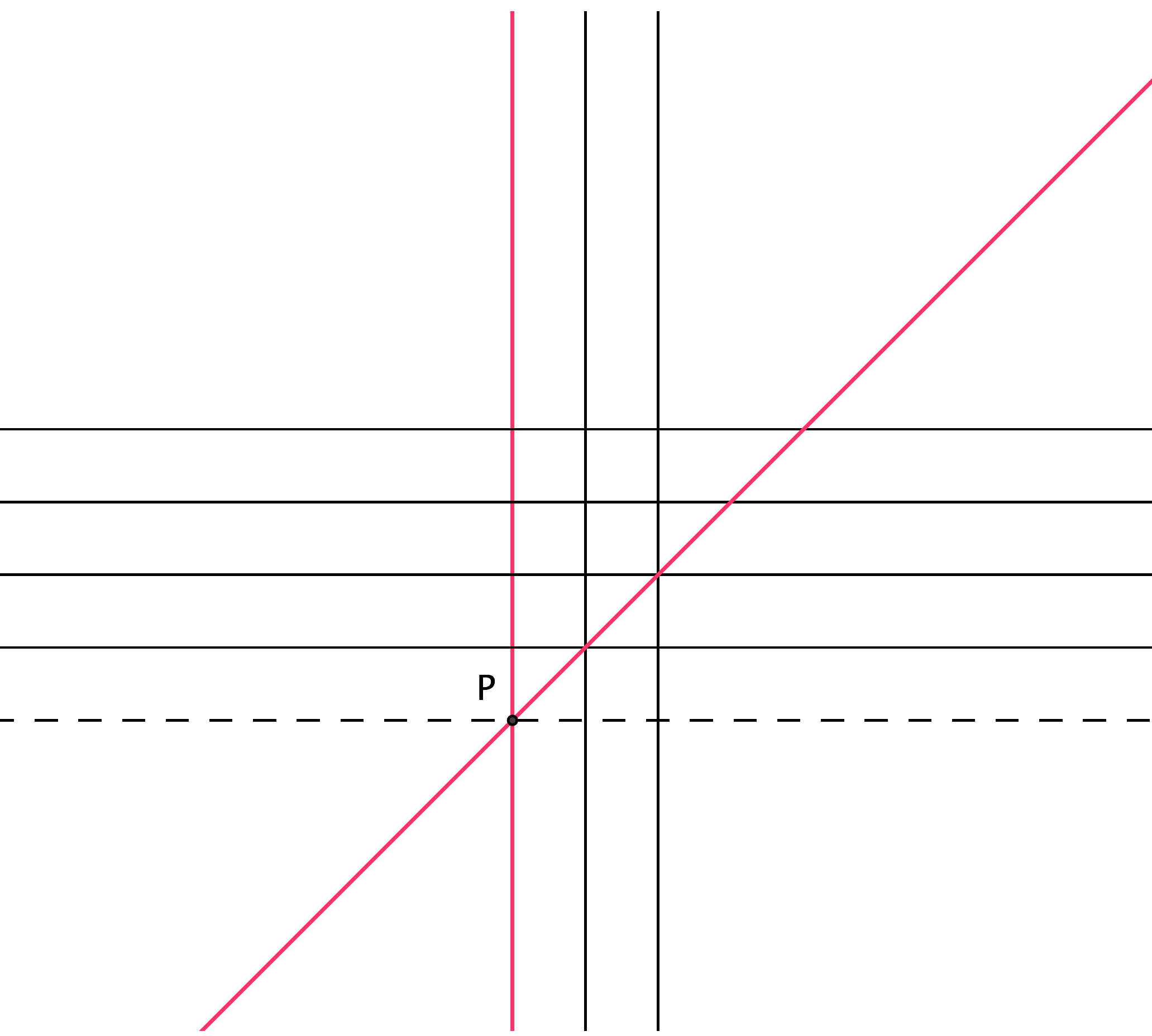}
\caption{}
\label{fig6}
\end{center}
\end{figure}

\subsection{Addition}\label{eg-addition}
We build now a nearly free vector bundle with exponents $(a+1,b+1)$ by adding one line (the blue line in Figure \ref{fig7}) passing through $a-1$ triple points to the free arrangement $C_0$ (see Proposition \ref{PropAdd}).

The point $P$ of intersection of the diagonal line of $C_0$ and the blue line is the jumping point of the nearly free arrangement. Indeed, $P$ belongs to the added line by Proposition \ref{45} and since 
the multiplicities on the diagonal line are $m_1=\cdots =m_{a-1}=2$ and $m_a=\cdots m_{b+2}=1$ the splitting type is $(a,b+1)$ which proves that $P$ belongs also to the diagonal line.

On the contrary, the splitting type on a horizontal line that does not contain nor $P$ neither a triple point is 
$(a+1,b)$ since the multiplicities are $m_1=b$ and $m_2=\cdots =m_{a+2}=1$. This proves that the generic splitting is this one and that the two lines of $C$ through $P$ are the only jumping lines of $C$.

\medskip

\begin{figure}[ht!]
\begin{center}
\includegraphics[scale=0.44]{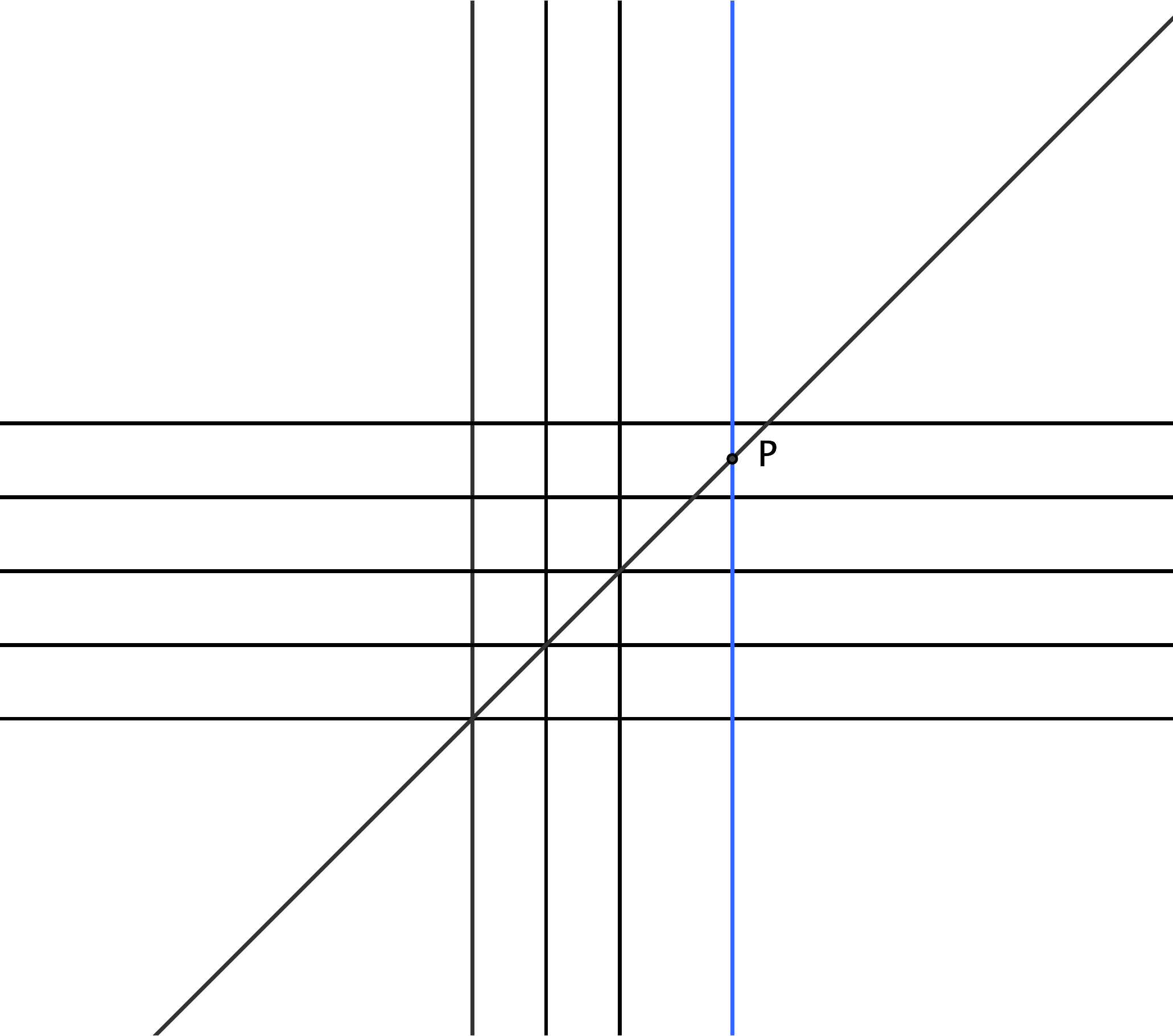}
\caption{}
\label{fig7}
\end{center}
\end{figure}

As a consequence of Proposition \ref{prop-detpoint}, we obtain the following result
\begin{Corollary}
Any nearly free vector bundle can be obtained from a free arrangement by addition or deletion.
\end{Corollary}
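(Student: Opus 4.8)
The plan is to combine the rigidity of Proposition~\ref{prop-detpoint} with the explicit constructions of the present section. Let $E$ be a nearly free vector bundle with exponents $(a,b)$, $0\le a\le b$, and let $P\in\PP^2$ be its jumping point (in the stable case $a=b$ the bundle is $T_{\PP^2}(-b-1)$ and $P$ is immaterial). By Proposition~\ref{prop-detpoint}, $E$ is, up to isomorphism, the \emph{unique} nearly free vector bundle with these invariants; hence it is enough to exhibit, for the given pair $(a,b)$ and the given point $P$, one arrangement obtained from a free one by addition or deletion whose logarithmic bundle is nearly free with exponents $(a,b)$ and has jumping point $P$.

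First I would take the deletion example of Subsection~\ref{eg-deletion}: for the free arrangement $C_0$ of Section~\ref{sec-ex} with exponents $(a,b)$ and the line $L_0\subset C_0$ through $b$ triple points considered there, the arrangement $C_0\setminus\{L_0\}$ is nearly free with exponents $(a,b)$, with some jumping point $P'$ (the addition example of Subsection~\ref{eg-addition}, together with Proposition~\ref{PropAdd}, plays the same role and gives the ``addition'' alternative). Since $\mathrm{PGL}_3(\CC)$ acts transitively on $\PP^2$, choose $g$ with $g(P')=P$. All the relevant data transform equivariantly under $g$: the logarithmic bundle of the image arrangement is the pullback of the logarithmic bundle of the original, so $g(C_0)$ is again free with the same exponents, $g(C_0)\setminus\{g(L_0)\}=g(C_0\setminus\{L_0\})$ is again nearly free with exponents $(a,b)$, and, since the pullback carries jumping lines to jumping lines, its jumping point is $g(P')=P$. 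By the uniqueness part of Proposition~\ref{prop-detpoint} we conclude $\TT_{g(C_0\setminus\{L_0\})}\cong E$, that is, $E$ is obtained by deleting the line $g(L_0)$ from the free arrangement $g(C_0)$.

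The step that needs care — and the main obstacle — is to make sure the families of Section~\ref{sec-ex} genuinely realize \emph{every} admissible pair $(a,b)$ with $0\le a\le b$, and in particular the small, degenerate cases: when $(a,b)=(1,1)$ the bundle $E\simeq T_{\PP^2}(-2)$ is not the logarithmic bundle of any line arrangement at all, so there ``addition/deletion from a free arrangement'' must be read in the bundle-theoretic sense of Propositions~\ref{PropDel} and~\ref{PropAdd}, where the ``free arrangement'' is simply the free bundle $\OO_{\PP^2}(-a)\oplus\OO_{\PP^2}(-b)$; note that those two propositions, being ``if and only if'' statements, already give the ``in principle'' version of the corollary for every $(a,b)$ directly. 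Once this bookkeeping about the range covered by the explicit examples is in place, the uniqueness of Proposition~\ref{prop-detpoint} closes the argument with no further computation, and running the same discussion with Subsection~\ref{eg-addition} in place of Subsection~\ref{eg-deletion} shows that the ``addition'' alternative is available as well.
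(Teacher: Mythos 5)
Your proof follows the paper's own argument: both reduce to the uniqueness statement of Proposition~\ref{prop-detpoint} (a nearly free bundle is determined up to isomorphism by its exponents and its jumping point) and then invoke the explicit addition/deletion constructions of Section~\ref{sec-ex} to realize any prescribed pair $(a,b)$ and point $P$. Your $\mathrm{PGL}_3(\CC)$-transitivity step and your caveat about degenerate exponents merely make explicit details that the paper leaves implicit.
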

\begin{proof}
We have seen that, up to isomorphism, a nearly free vector bundle depends only on the exponent $(a,b)$ and the jumping point. Fixing them, it is always possible to construct with the two previous techniques a nearly free vector bundle with the given jumping point and exponent, this last depending on the number of lines.
\end{proof}

\section{Behaviour of the jumping point}\label{sec-jump}
Let us consider now a line arrangement $C$ that is nearly free. Then there exists an associated jumping point $P$. A natural question in the context of Terao's conjecture is the following one:
\begin{center}
\emph{Does this jumping point depend on the combinatorics of $C$?}
\end{center}
Actually the answer itself depends on the combinatorics we choose. Some combinatorics determine the position of the jumping point when some other combinatorics are not enough to determine its position. We give now some examples which tell us that we can have all possibilities for the position of $P$ relatively to $C$:
\begin{itemize}
\item the jumping point of $\TT_C$ is the intersection of at least two lines of the arrangement,
\item the jumping point is on one and only one line of the arrangement,
\item the jumping point does not belong to the arrangement, i.e. all the lines of the arrangement have generic splitting type.
\end{itemize}
We will show how the jumping points variates in the projective plane when shifting a line in order to maintain its combinatoric, see Example \ref{ex-1}. Moreover, we will show that for some fixed combinatorics, the jumping points will be forced to belong to the arrangement, see Example \ref{ex-line}, while for another fixed one, the jumping point can either belong or not to the arrangement, see Example \ref{ex-inout}.
\begin{Example}\label{ex-1}
Consider the arrangement of lines in $\PP^2$ defined by $$C:= x yz(x-z)(x+z)(y-z)(y+z)(x-y)(x+y)(x+ty-(1+t)z)=0$$ with $t\in \CC$ and represented in Figure \ref{fig1}.\\
\begin{figure}[ht!]
\begin{center}
\includegraphics[scale=0.10]{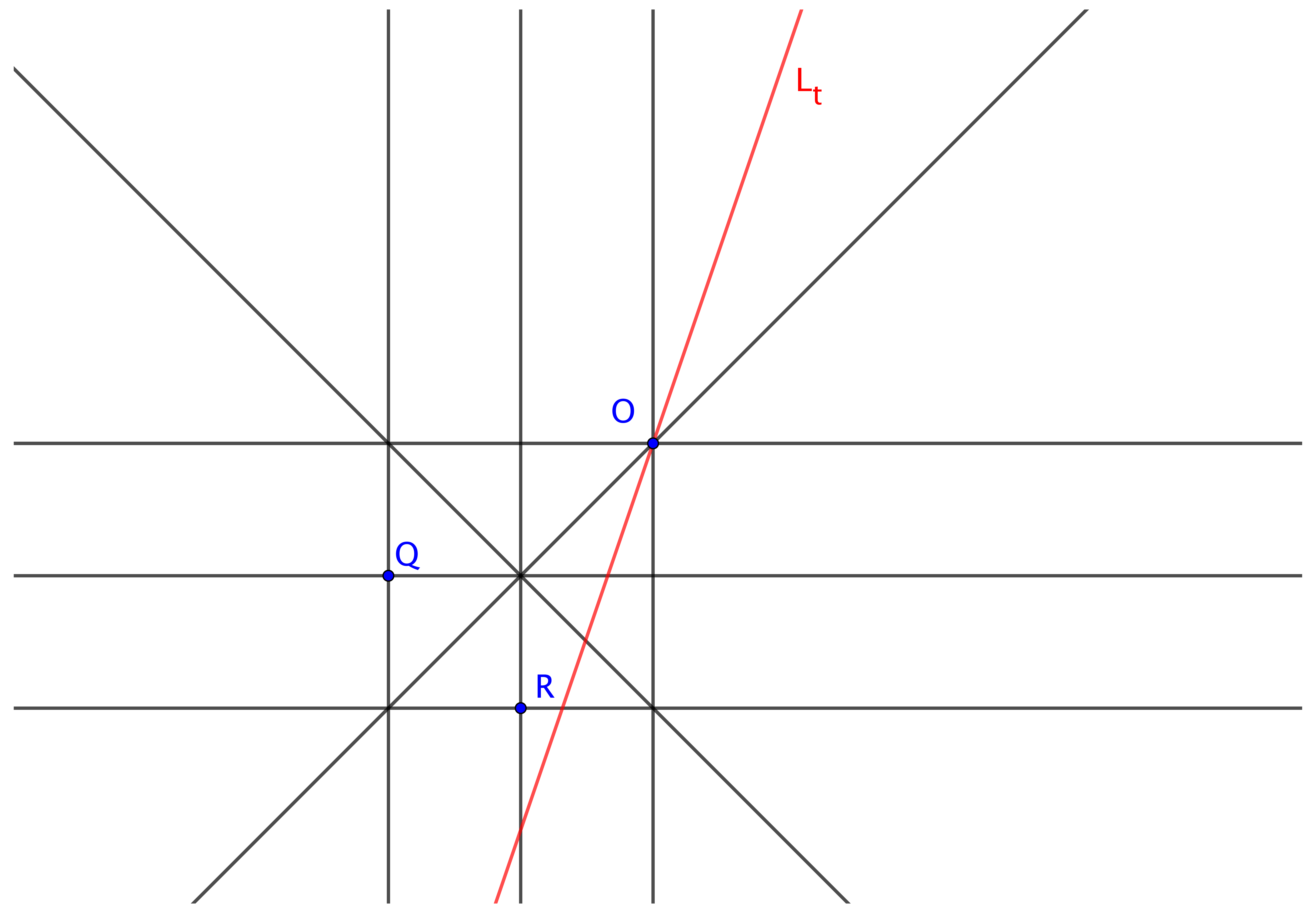}
\caption{}
\label{fig1}
\end{center}
\end{figure}
Notice that, in the figure, we omit the line at infinity, which nevertheless it is present in the arrangement, and the red line depends on the choice of the value $t$. It is possible to compute, for example using Macaulay2 (see \cite{GS}), that for any choice of $t$ except for the ones that give us the lines passing through $Q$ or $R$ or any line already present in the arrangement, the associated vector bundle is nearly free and the jumping point is given by the unique point in the projective plane which is the solution of the linear system
$$
\left\{
\begin{array}{l}
x+y+z=0\\
x+ty-(1+t)z=0.
\end{array}
\right.
$$
We first remark that this arrangement is obtained by adding the line $L_t$ to the free arrangement (called $B3$ in the literature; its exponents are $(3,5)$) consisting of the $9$ remaining lines.
As shown in section \ref{sec-addel}, the point always belongs to the line $L_t$. Therefore any such arrangement defining a nearly free vector bundle contains only one jumping line. If we consider the line passing either through the point $Q$ or $R$, we obtain a free arrangement. In conclusion this example gives a family of nearly free vector bundles
 which are parametrized by an open subset $\mathcal{U}$ of the line $x+y+z=0$, where each point of the open subset considered represents the jumping point of the associated nearly free vector bundle. Indeed, the arrangement will determine a nearly free vector bundle if and only if the line $L_t$ does not pass through any triple points except for $O$. This means that the cases we have to exclude are five: the line passing through $O$ and $Q$, the line passing through $O$ and $R$ and when $L_t$ coincides with a line already in the arrangement, i.e the lines $x-y=0$, $x-z=0$ and $y-z=0$. Therefore, $\mathcal{U} = \{x+y+z=0\} \setminus \{Q,R,(1:1:-2),(1:-1:1),(-1:1:1)\}$. 
\end{Example}
The following example will be constructed by adding one line to a free arrangement. We have already notice that this choice implies that the jumping point will belong to it. Nevertheless, in the first part of the example we will show that jumping point can either be on one or multiple lines of the arrangement. In the second part we will show that a well chosen combinatorics forces the jumping point to belong to one and only one line of the arrangement.
\begin{Example}\label{ex-line}
Consider the arrangement defined by $$C:=xyz(x^2-z^2)(y^2-z^2)(x-y)(x-y+2z)=0.$$ The associated vector bundle $\TT_C$ is nearly free (the line $x-y+2z=0$ is added to the free arrangement with exponents $(3,4)$ and contains two triple points) and it is given by the following resolution
$$
0 \longrightarrow \OO_{\PP^2}(-6) \stackrel{A}{\longrightarrow} \OO_{\PP^2}(-4) \oplus \OO_{\PP^2}(-5)^2 \longrightarrow \TT_C \longrightarrow 0
$$
with 
$$
A = \left[ \begin{array}{ccc} 9y^2 + 9yz, & -4x-5y+z, & 5x +13y - 8z\end{array}\right]^t
$$
and its jumping point is $P=(-1:1:1)$. In Figure \ref{fig2}, we can see the arrangement, from which we omit the infinity line. The red lines are the jumping lines in the arrangement. In this case the jumping point is an intersection of two jumping lines.\\
\begin{figure}[ht!]
\begin{center}
\includegraphics[scale=0.10]{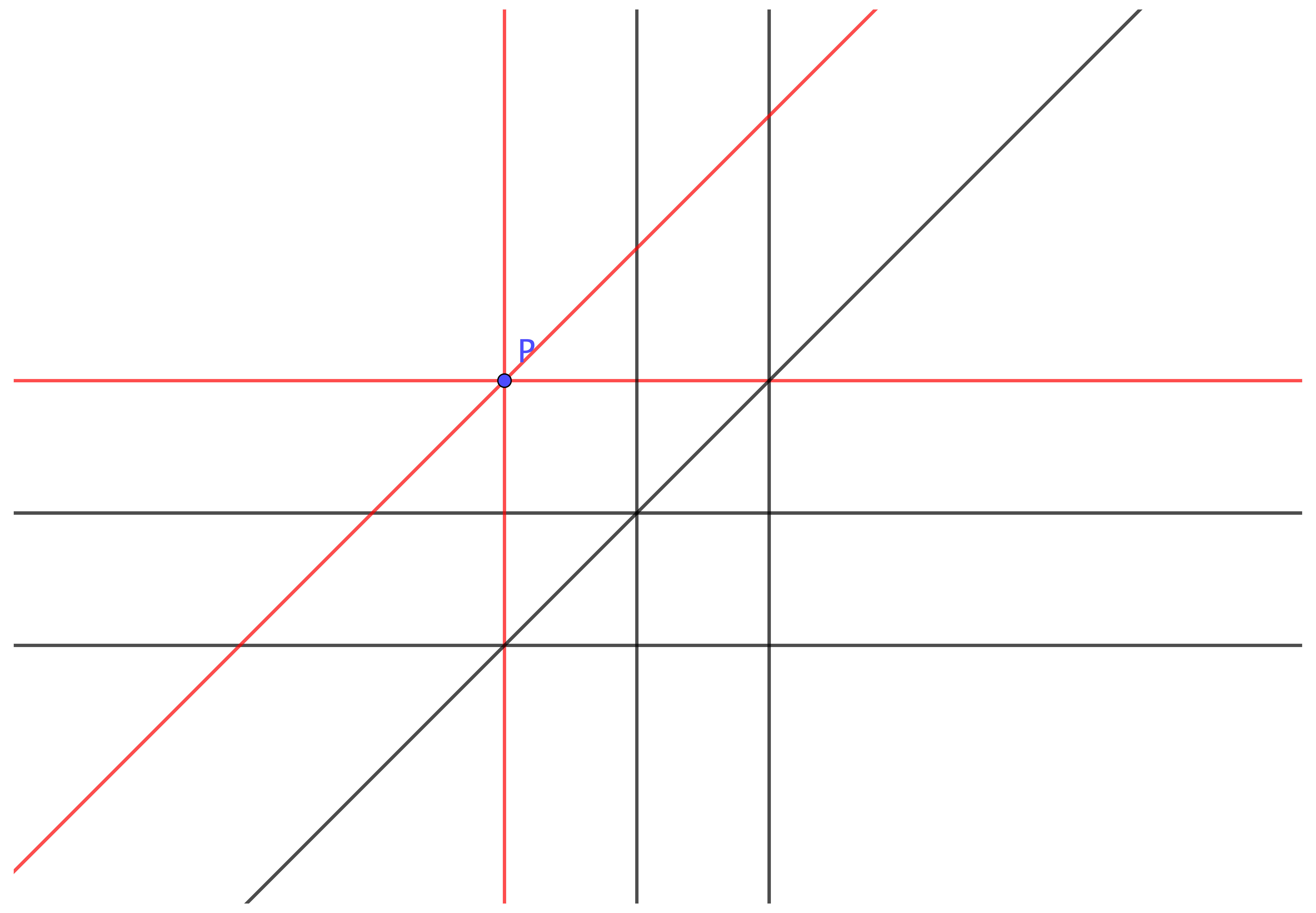}
\caption{}
\label{fig2}
\end{center}
\end{figure}
Let us consider now an arrangement with the exact same combinatorics of the previous one, but obtained for it by \emph{sliding} two perpendicular lines, i.e. $$C' :=(x+\frac{1}{2}z)(y+\frac{1}{2}z)z(x^2-z^2)(y^2-z^2)(x-y)(x-y+2z)=0.$$ The associated vector bundle $\TT_{C'}$ is also nearly free and it is given by the following resolution
$$
0 \longrightarrow \OO_{\PP^2}(-6) \stackrel{A}{\longrightarrow} \OO_{\PP^2}(-4) \oplus \OO_{\PP^2}(-5)^2 \longrightarrow \TT_C \longrightarrow 0
$$
with 
$$
A = \left[ \begin{array}{ccc} 18y^2 + 27yz + 9z^2, & 5x-13y-2z, & -8x -10y - 4z\end{array}\right]^t
$$
and its jumping point is $P'=(-4:2:3)$. Notice that jumping point has moved along the line $x-y+2z=0$ but it is no long intersection of two jumping lines of the arrangement. This situation is described in Figure \ref{fig3}. Notice that these two arrangements are constructed by adding a line ($x-y+2z=0$ in both cases) to a free arrangement.
\begin{figure}[ht!]
\begin{center}
\includegraphics[scale=0.10]{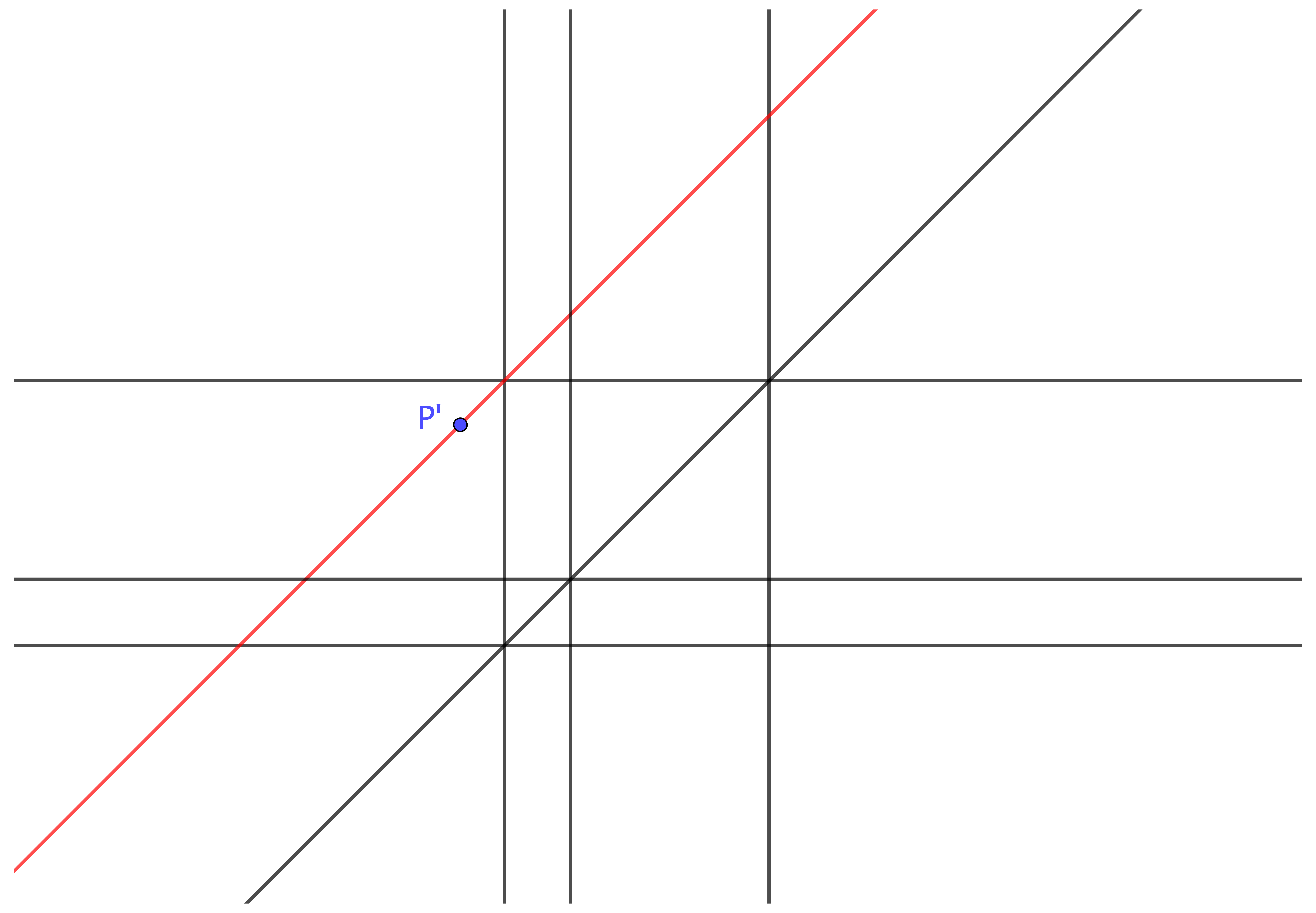}
\caption{}
\label{fig3}
\end{center}
\end{figure}

\smallskip

Generalizing the previous example taking more lines, the combinatorics of the arrangement can determine if the added line is the only jumping one.\\
Indeed, consider a free arrangement $\mathcal{A}$ with exponent $(4,5)$ given by two groups of four parallel lines, in the affine plane, having two different directions, one diagonal line passing through $4$ points of intersection of the grid formed by previous ones and the line at infinity. Obviously, we can choose  $z=0$ for the line at infinity, and the grid formed by vertical and horizontal lines, 
i.e. $x=\alpha_i z$ and $y=\beta_i z$ ($1\le i\le 4$). Then in order for the diagonal line to contain $4$ triple points, we must have $\alpha_i=\beta_i$.\\ 
Let us add a further line $D$, parallel to the diagonal one, passing through $2$ intersection points of the grid.\\
By Proposition \ref{PropAdd}, we obtain a nearly free arrangement $\mathcal{A}\cup D$ with exponents $(5,6)$, depicted in Figure \ref{fig9}.
\begin{figure}[ht!]
\begin{center}
\includegraphics[scale=0.10]{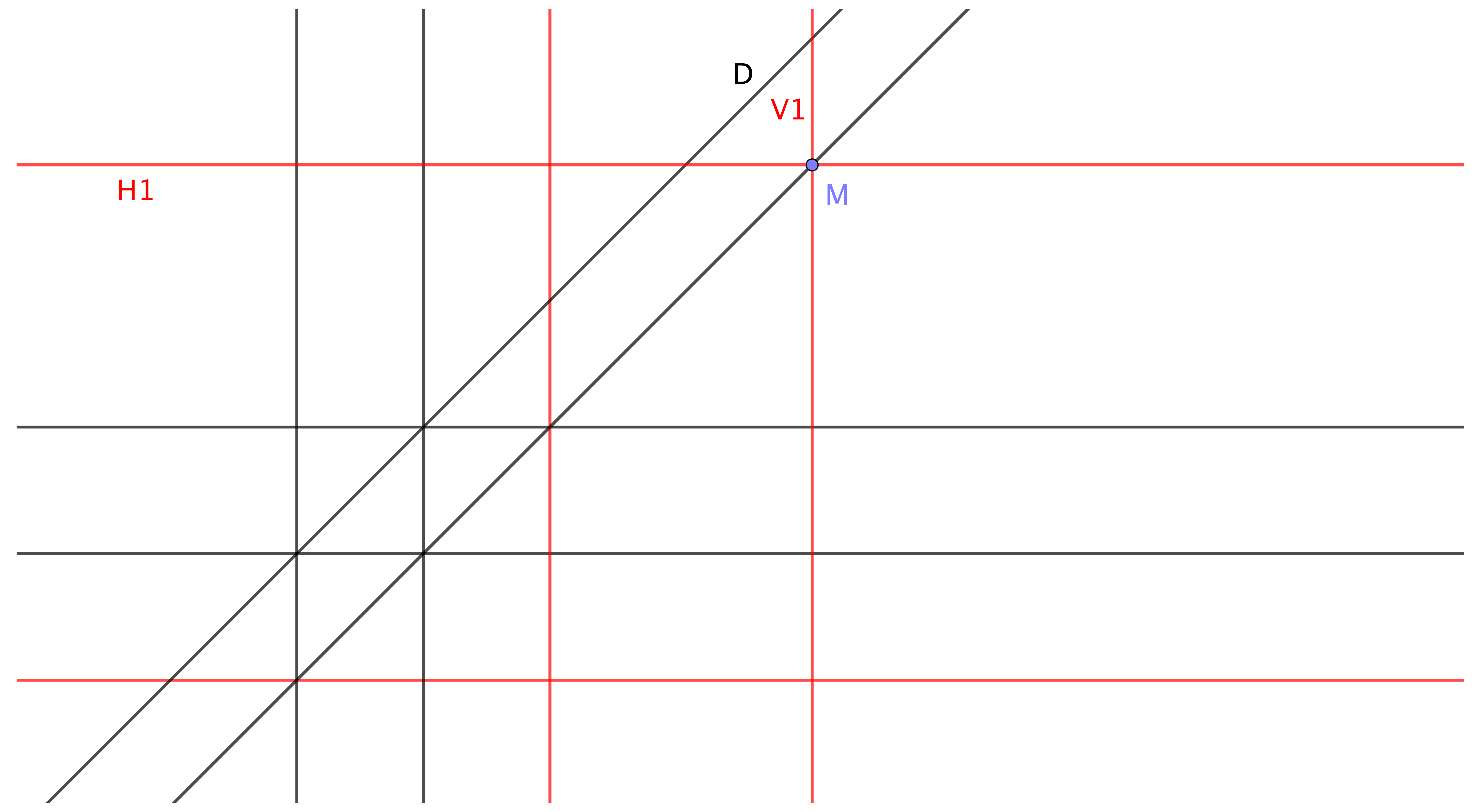}
\caption{}
\label{fig9}
\end{center}
\end{figure}
We claim that in this case, the combinatorics does not allow the jumping point, which we already know belongs to the added line $D$, to be on any another line of  the arrangement.\\
Observe that if we move the lines $H1$ and $V1$, maintaining their direction and the point $M$ as a triple point of the arrangement, we keep the same combinatorics and the jumping point $P$, associated to the obtained arrangement, moves along the line $D$.\\
Let us explain why $P\notin \mathcal{A}$. We must prove that the splitting type is $(5,5)$ for any line of $\mathcal{A}$. Directly the multiplicity of the points given by the other lines of the arrangement determine the splitting type for the red lines in the picture and for the line at infinity; indeed their splitting is $(5,5)$.\\
Let us consider now the black horizontal and vertical lines, where the points defined on them by the other lines of the arrangement have multiplicities, up to  order, $(1,1,2,2,4)$. Consider one of such black lines, which we will denote by $L$, with $\mathbb{C}[x,z]$ as its homogeneous ring of coordinates. The line $L$ contains three multiple points and two simple points.\\
Since $\mathrm{PGL}(2)$ acts transitively on three points we can assume that the point $(1,0)$ has multiplicity $4$, $(0,1)$ and $(1,1)$ have multiplicity $2$. The remaining points $(t_1,1)$ and $(t_2,1)$ are simple points. If $L$ is a jumping line, there exist, according to \cite{WY}, two polynomials $P$ and $Q$ with degree $4$ such that :
$$ z^4|Q, \,\, x^2|P, \,\, (x-z)^2|(P-Q), \,\, (x-t_1z)| (P-t_1z)\,\, \mathrm{and}\,\, (x-t_2z)| (P-t_2z).$$
We obtain that, by direct computation, $Q(x,z)=z^4$, $P(x,z)=x^2(x^2-4xz+4z^2)$, $t_1=\frac{3-\sqrt{5}}{2}$ and $t_2=\frac{3+\sqrt{5}}{2}$.\\ 
But considering this special position of points on $L$, it is impossible to recover the required combinatorics; in particular, it is not possible to find a line, parallel to the main diagonal line, through two triple points of the grid.
Figure \ref{fig10} explains the behaviour of this special arrangement.\\
\begin{figure}[ht!]
\begin{center}
\includegraphics[scale=0.14]{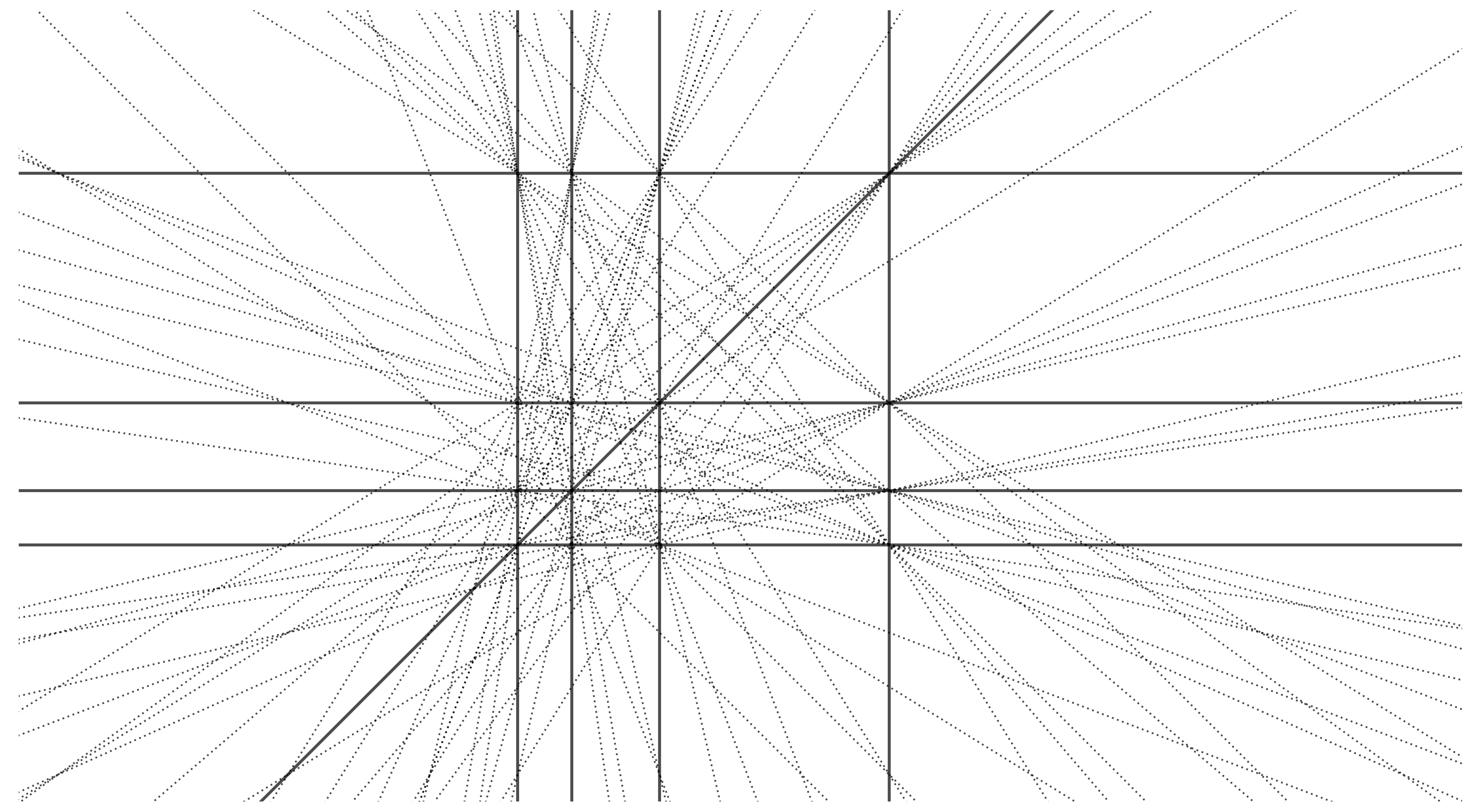}
\caption{}
\label{fig10}
\end{center}
\end{figure}
\end{Example}

The next example shows us that we can find a specific combinatorics for which the jumping point can belong or not to the arrangement. Indeed, we will see that shifting properly the lines of the arrangement, in order to maintain its combinatorics, the jumping point will either be inside or outside the arrangement.
\begin{Example}\label{ex-inout}
Let us consider the arrangements defined by $$C_t:= xyz(x-z)(x-2z)(x-tz)(y-z)(y-2z)(y-(t+1)z)(x-y)(x-y+z)=0.$$\\
If $t=1/2$, the associated nearly free bundle is defined by the matrix
$$
A=[-7x+11y-7z \:\: 14x-134y+161z \:\: 4y^2-7yz]
$$
and the jumping point $P=(17:21:16)$ does not belong to the arrangement $C_{1/2}$.\\
On the contrary, if we take $t=2/3$, then the associated nearly free bundle is defined by the matrix 
$$
A=[-5x+8y-5z \:\: 15x-144y+165z \:\: 6y^2-10yz]
$$
and the jumping point $P=(4:5:4)$ belongs to the arrangement $C_{2/3}$.\\
In Figure \ref{fig8}, the black lines are the ones in common between the two arrangements, while the red ones belong to $C_{2/3}$ and the blue ones to $C_{1/2}$. It can be checked directly from the picture that the two arrangements have the same combinatorics.
\begin{figure}[ht!]
\begin{center}
\includegraphics[scale=0.17]{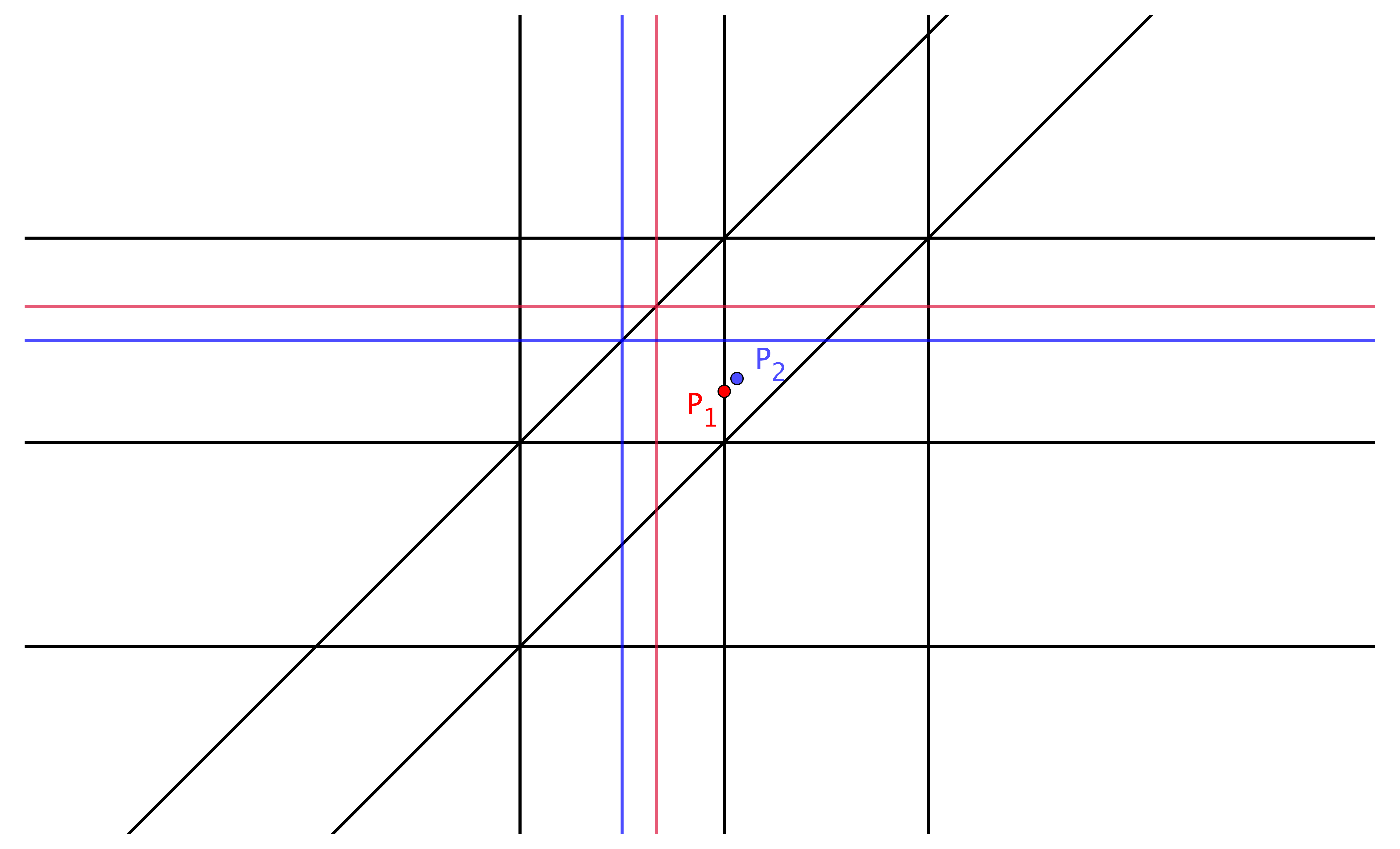}
\caption{}
\label{fig8}
\end{center}
\end{figure}
\end{Example}

\newpage

\end{document}